\documentclass{amsart}
\usepackage{tikz,hyperref,amsbsy}
\usepackage{amssymb}
\usepackage{pgfplots}
\usepackage{xcolor}
\pgfplotsset{compat=1.18}
\definecolor{myblue}{HTML}{1F77B4}
\usepackage[font=small,labelfont=bf]{caption}
\usepackage{enumerate}
\usepackage{enumitem}
\newtheorem{theorem}{Theorem}[section]

\newtheorem{lemma}[theorem]{Lemma}

\theoremstyle{definition}
\newtheorem{example}[theorem]{Example}
\numberwithin{equation}{section}
\newtheorem{remark}[theorem]{Remark}

\usepackage{mathrsfs}
\usepackage[bottom]{footmisc}

\newcommand{\bu}{\mathbf{u}}
\newcommand{\bv}{\mathbf{v}}
\newcommand{\bw}{\mathbf{w}}

\newcommand{\U}{\mathcal{U}} %sets of double-bases
\newcommand{\UU}{\overline{\mathcal{U}}} %sets of double-bases
\newcommand{\V}{\mathcal{V}} %sets of double-bases
\newcommand{\qtq}[1]{\quad \text{#1}\quad }%\quad\text{and}
\newcommand{\bj}[1]{\left(#1\right)}

\newcommand{\norm}[1]{\left\lVert #1\right\rVert}
\newcommand{\set}[1]{\left\lbrace #1\right\rbrace}%set

\title[Invariant measure for double base expansions]{invariant measure for double base expansions}
\author[W. Huang]{Wenduo Huang}
\address[W. Huang \& Y. Zou]{College of Mathematics and Statistics,
Shenzhen University,
Shenzhen 518060,
People's Republic of China}
\email{wdhuang2001@163.com, yuruzou@szu.edu.cn}
\author[V. Komornik]{Vilmos Komornik}
\address[V. Komornik]{Département de mathématique,
Université de Strasbourg,
7 rue René Descartes,
67084 Strasbourg Cedex, France}
\email{vilmos.komornik@math.unistra.fr}
\author[Y. Zou]{Yuru Zou}

\keywords{$(q_0,q_1)$-expansions, dynamical properties, greedy and lazy maps, unique invariant probability measure, univoque set, Lebesgue measure}

\subjclass{28D05, 11A63}

\thanks{ }

\begin{document}

\begin{abstract}
Given a pair $Q=(q_0,q_1)\in(1,\infty)^2$ with $q_0+q_1\ge q_0q_1$, a sequence $(c_i)\in\set{0,1}^\infty$ is called a $Q$-expansion of $x$ if
\begin{equation*}
x=\sum_{i=1}^{\infty}\frac{c_i}{q_{c_1}\cdots q_{c_i}}.
\end{equation*}
We primarily study  the dynamical properties of  the greedy and lazy maps, which are the piecewise-linear maps on the interval $I_Q=[0,\,1/(q_1-1)]$ defined by the corresponding algorithms for $Q$-expansions. 
We show that the greedy and lazy maps each of which has a unique absolutely continuous invariant probability measure, equivalent to the Lebesgue measure on the intervals
\begin{equation*}
\left[0,\frac{q_0}{q_1}\right)\qtq{and}\left(\frac{q_1}{q_0(q_1-1)}-1,\frac{1}{q_1-1}\right],
\end{equation*}
respectively. 
Furthermore, the corresponding dynamical systems are exact on $I_Q$. 
As a dynamical consequence, under the stronger condition  $q_0+q_1>q_0q_1$ the set of points having unique $Q$-expansions has Lebesgue measure zero, and almost every $x\in I_{Q}$ admits a continuum of $Q$-expansions.
\end{abstract}

\maketitle

\section{Introduction}\label{s1}

The study of \emph{$q$-expansions} of the form
\begin{equation*}
x=\sum_{i=1}^{\infty}\frac{d_i}{q^i},\quad (d_i)\in\set{0,1}^{\infty}
\end{equation*}
where the \emph{base} $q\in (1,2]$ is a given real number, goes back to the seminal works of Rényi \cite{Renyi1957} and Parry \cite{Parry1960}. 
Since then a large body of results has revealed deep connections with number theory, Diophantine approximation, symbolic dynamics, ergodic theory, and fractal geometry (see, e.g., \cite{AdamczewskiBugeaud2006, Kalle2020, Charlier2023, Masai2018, deVries2016, deVries2022, Sidorov2009}). 
It is well known that $x$ admits a $q$-expansion if and only if $x$ lies in the compact interval $[0,\frac{1}{q-1}]$ \cite{Renyi1957}, and that in case $q\in(1,2)$ (in Lebesgue's sense) almost every $x$ has $2^{\aleph_0}$  different $q$-expansions \cite{Sidorov2003}.
Furthermore, for any prescribed $k\in\mathbb N\cup\set{\aleph_0,2^{\aleph_0}}$ there exist $2^{\aleph_0}$ bases $q\in(1,2)$ in which $x=1$ has exactly $k$ different $q$-expansions \cite{Erdos1991,Erdos1992,Erdos1994}. 
The theory has subsequently been generalized to multiple-base representations \cite{Neunhauserer2021,Komornik2022,Li2021}, leading to a much richer symbolic structure.

Hochman and Shmerkin \cite{HochmanShmerkin2015} propose a pioneering fractal-geometric condition for determining whether a measure  is pointwise normal in base $n$. 
This condition is $C^1$-stable and generalizes relevant results from integer bases to the more general setting of Pisot bases $\beta>1$. 
Furthermore, Huang and Wang \cite{HuangWang2025} proved that for two distinct non-integer bases $\beta_1,\beta_2>1$, the corresponding Rényi--Parry measures coincide if and only if $\beta_1$ is a root of $x^{2}-qx-p=0$ with $p,q\in\mathbb{N}$ and $p\le q$, and moreover $\beta_{2}=\beta_{1}+1$.

Let $T_\beta:[0,1)\to[0,1)$ be the $\beta$-transformation $T_\beta(x)=\{\beta x\}$.
Rényi developed a canonical-digits method: for each finite canonical word $\omega=\omega_1\cdots\omega_n$ he defines the corresponding cylinder $\mathscr{E}(\omega)\subset(0,1)$ and proves that, for every $n\in\mathbb{N}$, the family
$\{\mathscr{E}(\omega):|\omega|=n\}$ is pairwise disjoint and forms a cover of $(0,1)$.
This cylinder decomposition yields two-sided estimates for the measures of multiple preimages, providing uniform upper and lower bounds for $\mu(T_\beta^{-n}E)$.
Invoking a Dunford--Miller type limit theorem, Rényi deduces the existence of a $T_\beta$-invariant measure $\nu$ which is equivalent to Lebesgue ($\nu\ll \lambda$), together with the almost everywhere convergence of Birkhoff averages:
\begin{equation*}
\frac1n\sum_{k=0}^{n-1} g\!\left(T_\beta^k x\right)\longrightarrow
\int g\,\mathrm{d}\nu=\int g\,h_{\beta}\,\mathrm{d}\lambda ,
\end{equation*}
where $h_{\beta}=\frac{\mathrm{d}\nu}{\mathrm{d}\lambda}$.
Finally, by an application of \emph{Knopp's theorem}, the system $(T_\beta,\nu)$ is ergodic. 
Parry showed that, for each $\beta>1$, the unique $T_\beta$-invariant measure admits an explicit step density:
\begin{equation*}
h_\beta(x)
=  \sum_{n=0}^{\infty} \beta^{-n}\,\mathbf{1}_{[0,\,T_\beta^{\,n}(1))}(x).
\end{equation*}
In the random setting, Dajani and de Vries \cite{DajaniDeVries2005,DajaniDeVries2007} introduce a probabilistic switching
between the greedy and the lazy $\beta$-maps to form the skew product $K_\beta$.
They prove that the topological entropy satisfies
$h_{\mathrm{top}}(K_\beta)=\log\!\big(1+\lfloor\beta\rfloor\big)$ and that there
is a unique measure of maximal entropy $\nu_\beta$. 
In parallel, working on the space $BV$ of functions of bounded variation, they establish Lasota--Yorke type estimates and, via Ces\`aro averaging, construct and uniquely identify an absolutely continuous invariant probability measure that is equivalent to the Lebesgue measure. 
Equivalently, its density $f\in BV$ is a fixed point of the corresponding Frobenius--Perron operator: $\mathcal Pf=f$, and the associated measure solves the affine fixed-point equation
\begin{equation*}
\mu \;=\; p\,\mu\circ T_\beta^{-1} \;+\; (1-p)\,\mu\circ S_\beta^{-1},
\end{equation*}
which is consistent with the invariance of the skew product. 
These results give a systematic description of the measure-theoretic structure and the coding features of random $\beta$-expansions.

Recently, following the work \cite{Komornik2022}, the papers \cite{Hu2025}, \cite{Komornik2025} and \cite{Lu2026} focused on \emph{double-base expansions} or \emph{$Q$-expansions} of the form
\begin{equation*}
x=\pi_Q(c):=\sum_{i=1}^{\infty}\frac{c_i}{q_{c_1}\cdots q_{c_i}},\quad c=(c_i)\in\set{0,1}^\infty,
\end{equation*}
associated with a \emph{double-base} $Q=(q_0,q_1)\in(1,\infty)^2$.
We have $\pi_Q(c)\in I_Q:=[0,\frac{1}{q_1-1}]$ for all $c\in\{0,1\}^\infty$.
It turned out that the numbers
\begin{equation*}
\ell_Q:=\frac{q_1}{q_0(q_1-1)}-1\qtq{and}r_Q:=\frac{q_0}{q_1}
\end{equation*}
play an important role, and they have expansions if and only if $q_0+q_1\ge q_0q_1$.
Henceforth always assuming this condition, it was proved in \cite{Hu2025} that all three sets
\begin{align*}
&\U:=\set{Q\in (1,\infty)^2 : \text{both }\ell_Q\text{ and } r_Q \text{ have  unique $Q$-expansions}},\\
&\UU \ \text{ denotes the topological closure of }\U,\\
&\V:=\set{Q\in (1,\infty)^2 : \text{both }\ell_Q\text{ and } r_Q \text{ have  unique doubly infinite $Q$-expansions}}
\end{align*}
have Hausdorff dimension two.
(The set $\V$ is always closed, $\U\subsetneqq\UU\subsetneqq\V$, and the three sets reduce to the usual ones in the equal-base case $q_0=q_1$.)
Based on this work, the paper \cite{Komornik2025} provided a complete description of the topological relationships and properties among the following variants of $\U$, $\UU$ and $\V$:
\begin{align*}
&\U_Q:=\set{x\in I_Q : x\text{ has a unique $Q$-expansion}},\\
&\UU_Q\ \text{ denotes the topological closure of }\U_Q,\\
&\V_Q:=\set{x\in I_Q : x\text{ has at most one doubly infinite $Q$-expansion}}.
\end{align*}
Here too, $\V_Q$ is always closed, and $\U_Q\subset\UU_Q\subset\V_Q$, but the inclusions are not always strict: four different cases may occur according to the value of $Q$.
Furthermore, the points in $\mathcal{V}_Q \setminus \mathcal{U}_Q$ may have 2, 3 or $\aleph_0$  distinct expansions (in the equal base case we never have three expansions). The paper \cite{Lu2026} gave a explicit formula for the Hausdorff dimension of $\U_Q$ and showed the continuity of this formula as a function of $Q$.

Henceforth we always assume that
\begin{equation}\label{Q-assumption}
Q=(q_0,q_1)\in(1,\infty)^2\qtq{with}q_0+q_1\ge q_0q_1.
\end{equation}
Then every number $x\in I_Q$ has at least one \emph{$Q$-expansion}, namely the \emph{greedy $Q$-expansion} $b(x,Q)=(b_i)=(b_1,b_2,\ldots)$.
It is defined recursively by the greedy algorithm, choosing in each step the largest possible digit, so that it is the lexicographically largest expansion of $x$.
The greedy greedy map $G(x): I_Q\mapsto I_Q$, defined by the formula
\begin{equation}\label{greedy-map}
G(x)=
\begin{cases}
    G_0(x)=q_0x,& x\in I_0:=[0,\frac{1}{q_1})\\
    G_1(x)=q_1x-1,& x\in I_1:=[\frac{1}{q_1},\frac{1}{q_1-1}]
\end{cases}
\end{equation}
generates the greedy expansion in the sense that 
\begin{equation*}
b_n=j\Longleftrightarrow G^{n-1}(x)\in I_j,\quad j=0,1.
\end{equation*}

Similarly, each $x\in I_Q$ has a lexicographically smallest expansion $l(x,Q)=(l_i)=(l_1,l_2,\ldots)$, also  called the  \emph{lazy $Q$-expansion} of $x$.
It is generated by the lazy map $L(x):I_Q\mapsto I_Q$, defined by the formula
\begin{equation}\label{lazy-map}
L(x)=
\begin{cases}
    L_0(x)=q_0x, & x\in J_0:=[0,\frac{1}{q_0(q_1-1)}]\\
    L_1(x)=q_1x-1, & x\in J_1:=(\frac{1}{q_0(q_1-1)},\frac{1}{q_1-1}]
\end{cases}
\end{equation}
in the sense that
\begin{equation*}
l_n=j\Longleftrightarrow L^{n-1}(x)\in J_j,\quad j=0,1.
\end{equation*}

The following theorem establishes the existence of absolutely continuous invariant probability measures for $G$ and 
$L$.
We recall that a dynamical system $(X,\Sigma,\mu,T)$ is called \emph{exact} if the set
\begin{equation*}
\cap_{n=1}^{\infty}\set{T^{-n}(A)\ :\ A\in\Sigma}
\end{equation*}
is trivial, i.e., it contains only sets of measure zero and their complements.

\begin{theorem}\label{t11}
Assume that $Q=(q_0,q_1)\in(1,\infty)^2$ with $q_0+q_1\ge q_0q_1$.

\begin{enumerate}
\item[(i)] There exist unique probability measures $\mu_{g,Q}$ and $\mu_{l,Q}$ on $I_Q$ that are invariant under $G$ and $L$, respectively.

\item[(ii)] They are absolutely continuous with respect to the Lebesgue measure $m$, and  and their density functions $h_{g,Q}$ and $h_{l,Q}$ are bounded. Moreover, $h_{g,Q}$ is non-increasing, while $h_{l,Q}$ is non-decreasing.

\item[(iii)] Finally, the dynamical systems $(I_Q,\Sigma_{I_Q},\mu_{g,Q},G)$ and $(I_Q,\Sigma_{I_Q},\mu_{l,Q},L)$ are exact.
\end{enumerate}
\end{theorem}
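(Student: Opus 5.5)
Our approach is to treat both maps as piecewise expanding, piecewise linear maps of an interval and apply the Lasota--Yorke / Li--Yorke theory, followed by a direct analysis of the support. The lazy map is conjugate to a greedy-type map via the reflection $x\mapsto \frac{1}{q_1-1}-x$, so we concentrate on $G$ and deduce the statements for $L$ at the end. Note that $G$ has two full-slope branches with slopes $q_0,q_1>1$. Under $q_0+q_1\ge q_0q_1$, the first branch maps $[0,1/q_1)$ onto $[0,q_0/q_1)=[0,r_Q)$. The second branch maps $[1/q_1,1/(q_1-1)]$ onto $[0,1/(q_1-1)]=I_Q$.

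\emph{Existence, absolute continuity and boundedness.} Let $P$ denote the Perron--Frobenius operator,
\begin{equation*}
Pf(x)=\frac{1}{q_0}f\Big(\frac{x}{q_0}\Big)\mathbf 1_{[0,r_Q)}(x)+\frac{1}{q_1}f\Big(\frac{x+1}{q_1}\Big).
\end{equation*}
Since $\min(q_0,q_1)>1$, the Lasota--Yorke inequality $\mathrm{Var}(P^nf)\le C\lambda^{n}\mathrm{Var} f+D\|f\|_1$ holds (pass to an iterate if $\min(q_0,q_1)<2$). Cesàro averages of $P^n\mathbf 1$ then converge to a fixed point $h\in BV$, which is bounded, and $h\,dm$ is invariant.

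\emph{Monotonicity.} We show that $P$ preserves the cone of non-negative, non-increasing functions. If $f$ is non-increasing, the second term $\frac{1}{q_1}f(\frac{x+1}{q_1})$ is non-increasing. The first term is non-increasing on $[0,r_Q)$ and drops to $0$ afterwards, which is again a downward jump. Hence $P^n\mathbf 1$ is non-increasing for every $n$, and therefore so is the limit $h_{g,Q}$. For $L$, after the reflection the same argument gives a non-decreasing density.

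\emph{Support, uniqueness and exactness.} We first show that $h_{g,Q}>0$ on $[0,r_Q)$ and $h_{g,Q}=0$ on $[r_Q,1/(q_1-1)]$.
\begin{itemize}
\item The interval $[0,r_Q)$ is forward invariant. Indeed, $G_0$ maps $[0,1/q_1)$ into $[0,r_Q)$, and $G_1$ maps $[1/q_1,r_Q)$ into $[0,q_0-1)$. The assumption gives $q_0-1\le q_0/q_1$, i.e.\ $q_0q_1-q_1\le q_0$.
\item Every orbit enters $[0,r_Q)$, since points of $[r_Q,1/(q_1-1)]$ are expanded by $G_1$ until they leave. Hence the density vanishes on the complement, by invariance together with the expansion.
\item Since $h$ is non-increasing and not identically zero, it is positive on some interval $[0,\delta)$.
\end{itemize}
For exactness we use the standard covering criterion: every subinterval $J\subset[0,r_Q)$ satisfies $G^nJ\supseteq[0,r_Q)$ for some $n$.
\begin{itemize}
\item Because of the uniform expansion, $G^kJ$ eventually contains a discontinuity point $1/q_1$, or its length exceeds any bound, which is impossible. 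So some image contains an interval $[1/q_1-\epsilon,1/q_1]$.
\item Its next image then contains $[0,\epsilon')$.
\item Since $0$ is a fixed point of the expanding branch $G_0$, the intervals $[0,q_0^k\epsilon')$ grow until they cover $[0,1/q_1)$. One more step gives $[0,r_Q)$, using that $G_1[1/q_1,\ldots)$ also starts at $0$.
\end{itemize}
This covering property gives positivity of $h$ on all of $[0,r_Q)$. Combined with the lower bound $h\ge c>0$ there and the bounded distortion of the linear branches, it yields exactness by the usual argument: a tail set $A=G^{-n}A_n$ with $m(A)>0$ has density points whose cylinders map onto $[0,r_Q)$ with bounded distortion, forcing $\mu(A)=1$. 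Exactness implies ergodicity. Ergodicity together with the support property gives uniqueness among absolutely continuous invariant probability measures.

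\emph{Main obstacle.} The step we expect to be hardest is the precise control of the support and the covering argument in the degenerate case $q_0+q_1=q_0q_1$. There $r_Q=1/(q_1-1)$ and $q_0-1=r_Q$, and the images of the endpoints must be tracked carefully, with the care needed for half-open intervals. A second delicate point is the uniqueness statement in (i). If it is meant among all invariant probabilities, it is false, since the Dirac mass at $0$ is invariant. So we read (i) as uniqueness within the absolutely continuous class, which is what the argument above proves.
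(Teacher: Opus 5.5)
Your proposal is correct in substance, but it takes a different route from the paper. The paper proves nothing by hand here. It conjugates $G$ by $u\mapsto u/(q_1-1)$ to a map $\widetilde G$ on $[0,1]$ whose two linear branches both start at the value $0$, with $\widetilde G'(0)=q_0>1$. It then quotes Theorem~\ref{t21} (Lasota--Yorke 1982), which delivers at once the uniqueness of the absolutely continuous invariant probability, a bounded non-increasing density, and exactness. These are transported back through the conjugacy, and $L$ is called ``analogous''.

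You instead rebuild these conclusions from the 1973 BV machinery:
\begin{enumerate}
\item[(a)] existence from the Lasota--Yorke inequality;
\item[(b)] monotonicity from invariance of the cone of non-negative non-increasing functions under $P$ (the mechanism behind the 1982 theorem);
\item[(c)] exactness and uniqueness from a support and covering analysis, which the paper carries out only afterwards (Lemmas~\ref{l34}--\ref{l36}, Theorem~\ref{t12}), using Theorem~\ref{t11} as input.
\end{enumerate}
The paper's route is short but rests entirely on a specialized citation. Yours is self-contained and yields the support statement of Theorem~\ref{t12} along the way.

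Your reflection $x\mapsto\frac{1}{q_1-1}-x$ for $L$ is actually needed in the paper's route too. The rescaled lazy map's second branch starts at the value $1-q_1(q_0-1)/q_0$, which is positive unless $q_0+q_1=q_0q_1$. So hypothesis (b) of Theorem~\ref{t21} holds only after reflecting, and this is also why $h_{l,Q}$ is non-decreasing. Your reading of (i) is also right. Since $\delta_0$ is $G$-invariant and $\delta_{1/(q_1-1)}$ is $L$-invariant, uniqueness can only mean uniqueness among absolutely continuous measures, which is what the cited theorem gives.

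Two details in your sketch need fixing.

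\emph{Covering step.} The useful neighbourhood of the discontinuity is the right one, $[1/q_1,1/q_1+\epsilon)$, which $G_1$ sends to $[0,q_1\epsilon)$. The left one is sent to $[r_Q-q_0\epsilon,r_Q)$, not near $0$. So the expansion argument should conclude that $1/q_1$ eventually lies in the interior of $G^kJ$.

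\emph{Exactness step.} The claim that density points have cylinders mapped onto $[0,r_Q)$ needs justification. One way to supply it:
\begin{enumerate}
\item[(a)] If the orbit of $x$ avoids $0$, then $G^nx\ge 1/q_1$ for infinitely many $n$.
\item[(b)] For such $n$, the affine map $G^n$ sends the $n$-cylinder of $x$ onto an interval $[0,\xi)$ or $I_Q$ containing $G^nx$, hence containing $[0,1/q_1]$.
\item[(c)] Since $G$ maps $[0,1/q_1)$ onto $[0,r_Q)$ with slope $q_0$ and $A_n=G^{-1}A_{n+1}$, you get $m([0,r_Q)\setminus A_{n+1})\le q_0\,m([0,1/q_1)\setminus A_n)\to 0$ along these $n$.
\item[(d)] Hence $\mu(A)=\mu(A_{n+1})\to 1$, using boundedness of $h$.
\end{enumerate}

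Finally, the degenerate case you flag is harmless. There $r_Q=1/(q_1-1)$, both branches are onto up to an endpoint, and Lebesgue measure is invariant (Example~\ref{e41}).
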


We recall the classical hierarchy of statistical properties:
\begin{equation*}
\text{exact} \ \Longrightarrow\ \text{strong mixing}\ \Longrightarrow\ \text{ergodic};
\end{equation*}
see, e.g., the discussion in \cite{MackeyTyranKaminskaWalther2016}.
Therefore, by Theorem~\ref{t11}, the systems
\begin{equation*}
(I_Q,\Sigma_{I_Q},\mu_{g,Q},G)\qtq{and}(I_Q,\Sigma_{I_Q},\mu_{l,Q},L)
\end{equation*}
are strong mixing and ergodic.

Next we determine the supports of the measures $\mu_{g,Q}$ and $\mu_{l,Q}$.

\begin{theorem}\label{t12}\ 
Assume that $Q=(q_0,q_1)\in(1,\infty)^2$ with $q_0+q_1\ge q_0q_1$.

\begin{enumerate}
\item[(i)] The invariant  measure $\mu_{g,Q}$ is  equivalent to the Lebesgue measure $m$ on the interval $[0,r_Q)$, and vanishes outside it.

\item[(ii)] Similarly, the invariant  measure $\mu_{l,Q}$ is  equivalent to the Lebesgue measure $m$ on the interval $\left(\ell_Q,\frac{1}{q_1-1}\right]$, and vanishes outside it.
\end{enumerate}
\end{theorem}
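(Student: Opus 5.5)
We prove (i) and deduce (ii) from it through the conjugacy $\phi(x)=\frac{1}{q_1-1}-x$. Indeed, $\phi$ is the reflection of $I_Q$, and it exchanges the roles of the two digits, i.e. of the bases $q_0$ and $q_1$. One checks that $\phi(\ell_Q)$ corresponds to the $r$-point of the reflected problem, so the lazy map for $Q$ is conjugate (up to endpoints) to a greedy-type map for the swapped pair. If that bookkeeping proves awkward, we will simply repeat the argument of (i) verbatim for $L$, using that $h_{l,Q}$ is non-decreasing. The plan therefore concentrates on $G$ and $r_Q=q_0/q_1$.

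First, the measure vanishes outside $[0,r_Q)$. Note that $r_Q\ge 1/q_1$. On $I_0=[0,1/q_1)$ we have $G(x)=q_0x<q_0/q_1=r_Q$. On $I_1$ we have
\[
G(x)=q_1x-1\le \frac{q_1}{q_1-1}-1=\frac{1}{q_1-1},
\]
but this is not below $r_Q$ in general. The better route is to observe that $J:=[0,r_Q)$ is forward invariant and absorbing. For $x\in[1/q_1,r_Q)$ we get $G(x)<q_0-1\le r_Q$, where the last inequality is $q_0q_1-q_1\le q_0$, which is exactly the assumption \eqref{Q-assumption}. Hence $G(J)\subset J$.

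Next we show that every orbit enters $J$. For $x\ge r_Q$ we have $x\in I_1$, and $G(x)-\frac{1}{q_1-1}=q_1\bigl(x-\frac{1}{q_1-1}\bigr)$. The distance to the fixed point $\frac{1}{q_1-1}$ is multiplied by $q_1>1$, so every $x\neq\frac{1}{q_1-1}$ eventually enters $J$. Consequently $G^{-n}(I_Q\setminus J)$ decreases to a set contained in $\{x: G^nx\notin J\ \forall n\}=\{\frac{1}{q_1-1}\}$. Invariance gives
\[
\mu_{g,Q}(I_Q\setminus J)=\mu_{g,Q}\bigl(G^{-n}(I_Q\setminus J)\bigr)\to\mu_{g,Q}\bigl(\{\tfrac{1}{q_1-1}\}\bigr)=0,
\]
the last equality by absolute continuity from Theorem~\ref{t11}(ii).

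Second, the measure is equivalent to $m$ on $J$. By Theorem~\ref{t11}(ii), $h_{g,Q}$ is non-increasing and integrates to $1$, so $h_{g,Q}>0$ on some interval $[0,a)$ with $a>0$. Set
\[
a^*:=\sup\{t: h_{g,Q}>0 \text{ a.e. on } [0,t)\}.
\]
Monotonicity gives $h_{g,Q}>0$ on $[0,a^*)$ and $h_{g,Q}=0$ a.e. on $(a^*,\frac{1}{q_1-1}]$, so the support is $[0,a^*)$ and it remains to show $a^*\ge r_Q$. We use invariance of the support: $\mu_{g,Q}(G^{-1}S)=\mu_{g,Q}(S)$ forces $G([0,a^*))\subset[0,a^*]$ up to null sets, since mass on $[0,a^*)$ pushes forward into the support. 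Suppose $a^*<r_Q$. If $a^*\le 1/q_1$, then $G([0,a^*))=[0,q_0a^*)$ with $q_0a^*>a^*$, a contradiction. If $1/q_1<a^*<r_Q$, then $G([0,1/q_1))=[0,r_Q)\not\subset[0,a^*]$, again a contradiction. Hence $a^*\ge r_Q$, and combined with the first step the support is exactly $[0,r_Q)$ with a positive density there.

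The main obstacle I expect is making "the image of the support lies in the support" rigorous with densities rather than sets. I would do this via the Perron--Frobenius equation
\[
h(y)=\sum_{Gx=y}\frac{h(x)}{|G'(x)|},
\]
which gives $h(y)\ge h(y/q_0)/q_0>0$ for $y<r_Q$ whenever $y/q_0<a^*$. Iterating this from the interval $[0,a)$ pushes positivity up to $r_Q$, since $y/q_0<y$, with a finite number of steps reaching the whole of $[0,r_Q)$.
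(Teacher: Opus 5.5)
Your proof is correct, and in both halves it takes a shorter route than the paper.

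\emph{Vanishing outside $[0,r_Q)$.} The paper works on the density side. It reduces $\mathcal{G}h_{g,Q}=h_{g,Q}$ on $[r_Q,\frac{1}{q_1-1}]$ to $h_{g,Q}(x)=q_1^{-1}h_{g,Q}\bigl(\frac{x+1}{q_1}\bigr)$, iterates this, and uses boundedness of the density (Lemma~\ref{l35}). You work on the measure side, with three ingredients: forward invariance of $[0,r_Q)$, escape of every orbit from the repelling fixed point, and continuity from above of $\mu_{g,Q}$. Your route shows that the hypothesis enters exactly as $q_0-1\le r_Q$. It needs only that $\mu_{g,Q}$ does not charge $\frac{1}{q_1-1}$, not that the density is bounded. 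That is where absolute continuity is genuinely used, since the Dirac mass at $\frac{1}{q_1-1}$ is $G$-invariant.

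\emph{Positivity on $[0,r_Q)$.} The paper builds the cylinders $I_{\bw}$ with full images $J_{\bw}=[0,r_Q)$ inside every interval (Lemmas~\ref{l31}, \ref{l33}, \ref{l34}). It then shows the zero set of $h_{g,Q}$ would charge every interval, and uses monotonicity only to conclude. You use monotonicity first, so the support is an interval $[0,a^*)$, and the branch $G_0$ alone pushes it out to $r_Q$. The step you flag as the main obstacle is already rigorous in set form. Invariance together with $h_{g,Q}>0$ a.e.\ on $[0,a^*)$ gives $m\bigl(\set{x<a^*: G(x)\ge a^*}\bigr)=0$. But if $a^*<r_Q$, the nondegenerate interval $\bigl(a^*/q_0,\min\set{a^*,1/q_1}\bigr)$ is mapped by $G_0$ beyond $a^*$. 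What the paper's cylinder machinery buys in addition is the uniform lower bound $\delta_g$ of Lemma~\ref{l36}, which Theorem~\ref{t12} does not need.

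\emph{Part (ii).} Your reflection gives (ii) exactly once you compose it with a dilation. The map $x\mapsto\frac{1}{q_0-1}-\frac{q_1-1}{q_0-1}x$ conjugates $L$ to the greedy map of $(q_1,q_0)$ on $[0,\frac{1}{q_0-1}]$, endpoint conventions included. It sends $(\ell_Q,\frac{1}{q_1-1}]$ onto $[0,q_1/q_0)$. The hypothesis is symmetric in $q_0,q_1$, and the absolutely continuous invariant measure is unique, so (ii) follows from (i) applied to $(q_1,q_0)$. This makes the paper's ``by symmetry'' precise, and your fallback of repeating the argument for $L$ is not needed.
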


Next we give an explicit expression for the invariant density functions.
Let us introduce the jump functions
\begin{align}
\tilde{h}_{g,Q}(x)
&=\sum_{n=0}^{\infty}\frac{1}{q_1^{s(n)}q_0^{n-s(n)}}1_{[0,G^n(r_Q)}(x)\label{tilde-h-greedy}
\intertext{and}
\tilde{h}_{l,Q}(x)
&=\sum_{n=0}^{\infty}\frac{1}{q_0^{n-t(n)}q_1^{t(n)}}1_{(L^n(\ell_Q),\frac{1}{q_1-1}]}(x),\label{tilde-h-lazy}
\end{align}
where $s(n)$ denotes the sum of the first $n$ digits of the greedy expansion of $r_Q$, and $t(n)$ denotes the sum of the first $n$ digits of the lazy expansion of $\ell_Q$.
The following result states that the invariant densities $h_{g,Q}$ and $h_{l,Q}$ are obtained from the functions \eqref{tilde-h-greedy} and \eqref{tilde-h-lazy}.

\begin{theorem}\label{t13}
Assume that $Q=(q_0,q_1)\in(1,\infty)^2$ with $q_0+q_1\ge q_0q_1$.
Then the invariant densities $h_{g,Q}$ and $h_{l,Q}$ are proportional to $\tilde{h}_{g,Q}$ and $\tilde{h}_{l,Q}$, respectively.
\end{theorem}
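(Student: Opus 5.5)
Since Theorem~\ref{t11} gives uniqueness of the absolutely continuous invariant probability measure, it suffices to show that $\tilde h_{g,Q}$ is integrable, nonnegative, not identically zero, and a fixed point of the Perron--Frobenius operator $P_G$ of $G$; normalizing then gives $h_{g,Q}$. The lazy case follows the same way, or via the conjugacy $x\mapsto \frac{1}{q_1-1}-x$, which exchanges greedy and lazy maps when $(q_0,q_1)$ is swapped appropriately. I will treat only the greedy case. The operator is
\begin{equation*}
P_Gf(x)=\frac{1}{q_0}f\Bigl(\frac{x}{q_0}\Bigr)1_{[0,q_0/q_1)}(x)+\frac{1}{q_1}f\Bigl(\frac{x+1}{q_1}\Bigr),
\end{equation*}
because $G_0(I_0)=[0,q_0/q_1)=[0,r_Q)$ and $G_1(I_1)=I_Q$. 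Integrability and boundedness are immediate, since the coefficients are at most $\min(q_0,q_1)^{-n}$.

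The key computation is the image of a single indicator $1_{[0,a)}$ with $a\in[0,r_Q]$. For $a<1/q_1$ I will check that
\begin{equation*}
P_G1_{[0,a)}=\frac{1}{q_0}1_{[0,q_0a)}=\frac{1}{q_0}1_{[0,G(a))}.
\end{equation*}
For $a\ge 1/q_1$ I will check that
\begin{equation*}
P_G1_{[0,a)}=\frac{1}{q_0}1_{[0,r_Q)}+\frac{1}{q_1}1_{[0,q_1a-1)}=\frac{1}{q_0}1_{[0,r_Q)}+\frac{1}{q_1}1_{[0,G(a))}.
\end{equation*}
So in both cases $P_G1_{[0,a)}=q_{b_1(a)}^{-1}1_{[0,G(a))}+\frac{1}{q_0}b_1(a)1_{[0,r_Q)}$, where $b_1(a)$ is the first greedy digit of $a$. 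The boundary conventions (half-open intervals, $G_0$ versus $G_1$ at $1/q_1$) need care, but they only affect null sets.

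Apply this with $a=G^n(r_Q)$ and weight $w_n=q_1^{-s(n)}q_0^{-(n-s(n))}$. Since $w_nq_{b_{n+1}}^{-1}=w_{n+1}$, summing gives
\begin{equation*}
P_G\tilde h_{g,Q}=\sum_{n\ge0}w_{n+1}1_{[0,G^{n+1}r_Q)}+\frac{1}{q_0}\Bigl(\sum_{n\ge0}w_nb_{n+1}\Bigr)1_{[0,r_Q)}=\tilde h_{g,Q}-1_{[0,r_Q)}+\frac{C}{q_0}1_{[0,r_Q)},
\end{equation*}
where $C=\sum_{n\ge0}w_nb_{n+1}$. Interchanging $P_G$ with the sum is justified by monotone or $L^1$ convergence.

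It remains to show $C=q_0$, and I expect this identity to be the main obstacle. The plan is to derive it from the greedy expansion of $r_Q$. Because $w_n$ equals $1/(q_{b_1}\cdots q_{b_n})$,
\begin{equation*}
r_Q=\sum_i\frac{b_i}{q_{b_1}\cdots q_{b_i}}=\sum_{n\ge0}w_n\frac{b_{n+1}}{q_{b_{n+1}}}=\frac{C}{q_1}.
\end{equation*}
Hence $C=q_1r_Q=q_0$. This uses that the greedy expansion of $r_Q$ exists and equals $\pi_Q(b(r_Q,Q))$, which is guaranteed by the standing assumption \eqref{Q-assumption}.

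Finally, $\tilde h_{g,Q}\ge w_0 1_{[0,r_Q)}$ is nonzero, so $\tilde h_{g,Q}/\|\tilde h_{g,Q}\|_1$ is an invariant density. By the uniqueness in Theorem~\ref{t11}(i) it equals $h_{g,Q}$. As a consistency check, it is non-increasing and supported on $[0,r_Q)$, in agreement with Theorems~\ref{t11}(ii) and \ref{t12}.
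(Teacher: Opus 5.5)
Your proposal is correct and follows essentially the same route as the paper. You split $\tilde h_{g,Q}$ into the weighted indicators $w_n 1_{[0,G^n(r_Q))}$ and compute their images under the Frobenius--Perron operator according to the digit $b_{n+1}$. You then telescope, close with the identity $\sum_n w_n b_{n+1}=q_1 r_Q=q_0$ read off from the greedy expansion of $r_Q$, and conclude by uniqueness as in Remark~\ref{r22}. One small correction to your side remark on the lazy case: the reflection $x\mapsto \frac{1}{q_1-1}-x$ alone does not conjugate $L$ to the greedy map of $(q_1,q_0)$. You must also rescale by $\frac{q_1-1}{q_0-1}$, because that greedy map lives on $[0,\frac{1}{q_0-1}]$; the direct symmetric computation, which is what the paper does, suffices anyway.
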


In the single base case, the univoque set $\mathcal{U}_Q$ is Lebesgue null set \cite{DajaniDeVries2007}. 
The same phenomenon persists in the double base setting:
 
\begin{theorem}\label{t14}
If $Q=(q_0,q_1)\in(1,\infty)^2$ with $q_0+q_1>q_0q_1$, then $\mathcal{U}_Q$ is a Lebesgue null set.
\end{theorem}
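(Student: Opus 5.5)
We prove Theorem~\ref{t14} by ergodicity, as in the single-base case. The key observation concerns the switch region
\begin{equation*}
S:=\left[\frac1{q_1},\frac{1}{q_0(q_1-1)}\right].
\end{equation*}
Under the strict inequality $q_0+q_1>q_0q_1$ this is a nondegenerate interval: the inequality is equivalent to $q_0(q_1-1)<q_1$. For $x\in S$ both digits are admissible, since $G(x)=q_1x-1\in I_Q$ and $q_0x\in I_Q$, so $x$ has two expansions with different first digits. Consequently, if $x\in\U_Q$, then the greedy orbit never enters $S$. Indeed, $x$ has a unique expansion exactly when its greedy and lazy expansions coincide. If $G^n(x)\in S$ for some $n$, then the first $n$ greedy digits followed by a $0$ and then any expansion of $q_0G^n(x)$ gives a second expansion of $x$, because at $G^n(x)$ the greedy digit is $1$. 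Hence
\begin{equation*}
\U_Q\subset\set{x\in I_Q : G^n(x)\notin S \text{ for all } n\ge 0}.
\end{equation*}

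Next we apply Theorem~\ref{t11} and Theorem~\ref{t12}. The measure $\mu_{g,Q}$ is ergodic and equivalent to $m$ on $[0,r_Q)$. We need $m(S\cap[0,r_Q))>0$, that is, $1/q_1<r_Q=q_0/q_1$. This holds because $q_0>1$. Therefore $\mu_{g,Q}(S)>0$, and by Birkhoff's ergodic theorem, applied to $1_S$, $\mu_{g,Q}$-almost every $x$ visits $S$ infinitely often. So $\mu_{g,Q}(\U_Q)=0$, and since $\mu_{g,Q}\sim m$ on $[0,r_Q)$, we get $m(\U_Q\cap[0,r_Q))=0$.

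It remains to handle $x\in[r_Q,1/(q_1-1)]$, where $\mu_{g,Q}$ vanishes. This is the main obstacle: we must show that almost every such point eventually enters $[0,r_Q)$ or $S$. Two routes are available.

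\emph{First route (preimages).} The set $E:=\set{x : G^n(x)\notin S \ \forall n}$ satisfies $G^{-1}(E)\supset E$, and $G$ is piecewise linear with slopes $q_0,q_1$. Each branch preimage of a null set is null, so
\begin{equation*}
m\bigl(E\cap G^{-1}([0,r_Q))\bigr)\le m\bigl(G^{-1}(E\cap[0,r_Q))\bigr)=0.
\end{equation*}
So we only need to show that almost every orbit eventually lands in $[0,r_Q)$. On $[r_Q,1/(q_1-1)]\subset I_1$ (since $r_Q\ge 1/q_1$), $G(x)=q_1x-1$ is expanding with fixed point $1/(q_1-1)$ at the right endpoint. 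Hence every $x<1/(q_1-1)$ eventually leaves $[r_Q,1/(q_1-1)]$, because the distance to the fixed point grows by the factor $q_1$. Moreover $G([r_Q,1/(q_1-1)])$ stays in $I_Q$, so the orbit then enters $[0,r_Q)$. Thus $E\setminus[0,r_Q)$ is contained in the countable union $\bigcup_n G^{-n}(E\cap[0,r_Q))$ plus one point, and this union is null.

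\emph{Second route (symmetry).} Alternatively, one can use the lazy map together with Theorem~\ref{t12}(ii) and the symmetric argument, since $[r_Q,1/(q_1-1)]\subset(\ell_Q,1/(q_1-1)]$ when $\ell_Q<r_Q$.

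The first route is cleaner and we adopt it. In it we will check carefully that greedy preimages of Lebesgue null sets are null and that $[r_Q,1/(q_1-1))$ is eventually left.
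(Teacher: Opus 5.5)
Your proof is correct, but it takes a different route from the paper.

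The paper applies Birkhoff's theorem to the observable $x\mapsto x$, i.e.\ to the normalized errors $\theta_n$, for both $G$ and $L$. It extends the almost-everywhere convergence from $[0,r_Q]$ to $I_Q$ via the partial inverse $H$ of Lemma~\ref{l52}. It then combines Chebyshev's integral inequality, the monotonicity of $h_{g,Q}$ and $h_{l,Q}$, and their non-constancy to get $\int_{I_Q}xh_{g,Q}\,dx<\frac{1}{2(q_1-1)}<\int_{I_Q}xh_{l,Q}\,dx$. The non-constancy holds because each density vanishes on a nondegenerate interval by Theorem~\ref{t12}, and this is where $q_0+q_1>q_0q_1$ enters. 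Hence greedy and lazy expansions differ almost everywhere.

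You work with the greedy map alone. You place $\U_Q$ inside (indeed it equals) the set $E$ of points whose greedy orbit avoids the switch region $S$, which is nondegenerate exactly under the strict inequality. Ergodicity, together with $\mu_{g,Q}(S)>0$ and $\mu_{g,Q}\sim m$ on $[0,r_Q)$, then gives $m(E\cap[0,r_Q))=0$. Your passage to $[r_Q,\frac{1}{q_1-1})$ uses forward orbits, $G(E)\subset E$ and the nonsingularity of $G$; it is the forward version of the paper's $H$-argument.

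Your route uses only the greedy half of Theorems~\ref{t11}--\ref{t12}. It avoids the lazy measure and Chebyshev's inequality. It also gives the stronger fact that almost every greedy orbit enters $S$ infinitely often, which is the natural input for the branching argument behind Theorem~\ref{t15}. The paper's route instead yields a quantitative by-product: the Ces\`aro means of the normalized greedy and lazy errors converge almost everywhere to constants strictly below and strictly above $\frac{1}{2(q_1-1)}$, respectively.

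You were right to discard the second route, since $\ell_Q<r_Q$ fails in general; for $q_0=q_1=q$ it amounts to $q>3/2$.
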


The following theorem is an immediate consequence of Theorem~\ref{t14}.

\begin{theorem}\label{t15}
If $Q=(q_0,q_1)\in(1,\infty)^2$ with $q_0+q_1>q_0q_1$, then almost every $x\in I_Q$ has a continuum of $Q$-expansions.
\end{theorem}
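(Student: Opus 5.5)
The plan is to deduce Theorem~\ref{t15} from Theorem~\ref{t14}, using the dynamics of the greedy map $G$ and the lazy map $L$. The key notion is the \emph{switch region} $S:=[\frac{1}{q_1},\frac{1}{q_0(q_1-1)}]$. A point $x\in I_Q$ lies in $S$ exactly when both digits are admissible as the first digit of an expansion of $x$, that is, $x\in\pi_Q(\{0\}\times\{0,1\}^\infty)\cap\pi_Q(\{1\}\times\{0,1\}^\infty)$. The condition $q_0+q_1>q_0q_1$ is equivalent to $\frac{1}{q_1}<\frac{1}{q_0(q_1-1)}$, so $S$ is a nondegenerate interval.

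First, a point $x$ has a unique expansion if and only if no orbit point of its (unique) expansion ever enters $S$. So $x\notin\U_Q$ means that some shifted remainder $y=\pi_Q(\sigma^n c)$ of some expansion $c$ of $x$ lies in $S$. A cleaner route is to use the greedy map directly. Let
\begin{equation*}
N:=\set{x\in I_Q : G^n(x)\in S \text{ for infinitely many } n}.
\end{equation*}
I claim that every $x\in N$ has $2^{\aleph_0}$ expansions. Whenever the current remainder lies in $S$, we may branch with digit $0$ or digit $1$. After branching we continue with the greedy algorithm, and the greedy orbit of the new remainder again returns to $S$ infinitely often. This needs justification, so the branching has to be arranged carefully.

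To make the branching robust, I would instead define $N$ as the set of points for which, for every expansion reachable from $x$ by finitely many choices followed by greedy continuation, the remainder enters $S$ infinitely often. Then the set of expansions of $x\in N$ contains a binary tree with infinitely many branching points along every path, which gives $2^{\aleph_0}$ distinct expansions.

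It remains to show that $N$ has full measure. Since $S$ has positive Lebesgue measure and lies in $[0,r_Q)$ (because $\frac{1}{q_0(q_1-1)}\le r_Q$ when $q_0+q_1\ge q_0q_1$), Theorem~\ref{t12} shows that $\mu_{g,Q}(S)>0$. By Theorem~\ref{t11}, $G$ is ergodic and $\mu_{g,Q}$ is equivalent to $m$ on $[0,r_Q)$. The Birkhoff theorem then gives that $\mu_{g,Q}$-a.e.\ point visits $S$ infinitely often. Points of $I_Q\setminus[0,r_Q)$ are mapped into $[0,r_Q)$ after finitely many steps, up to a null set; this uses that $\mu_{g,Q}$ vanishes there, so a.e.\ orbit eventually enters the support. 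Since $G$ is nonsingular, being piecewise affine, taking preimages of null sets preserves nullity. Finally, the set of points that ever reach a given value after finitely many choices is a countable union of affine images $x\mapsto (x+c)/q$ of the good set, and these images preserve both null sets and full-measure sets. Hence the countably many conditions describing the robust $N$ each hold a.e., and $N$ has full measure.

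The main obstacle is this last bookkeeping step. One must show that after each branch the remainder is again a Lebesgue-typical point, which holds because the remainder is an affine image of $x$. A countable intersection over all finite branch words of full-measure sets is still of full measure, so this goes through. Theorem~\ref{t14} is consistent with this conclusion but is not strictly needed beyond motivating it.
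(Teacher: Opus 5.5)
Your argument is correct, but it takes a different route from the paper. The paper derives Theorem~\ref{t15} from Theorem~\ref{t14}. Because $L$ and $G$ are nonsingular, the set $\mathcal{W}_Q$ of points sent into the null set $\mathcal{U}_Q$ by some finite composition of $L$ and $G$ is null, and Sidorov's bifurcation argument is cited to give $2^{\aleph_0}$ expansions off $\mathcal{W}_Q$. Theorem~\ref{t14} itself is proved by comparing the greedy and lazy Birkhoff averages of the normalized errors. That requires both invariant densities, their opposite monotonicity and non-constancy, and Chebyshev's inequality.

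You bypass Theorem~\ref{t14}: ergodicity of $G$ and $\mu_{g,Q}(S)>0$ show directly that Lebesgue-a.e.\ greedy orbit meets the switch region infinitely often. Since a point has a unique expansion exactly when its greedy orbit never meets $S$, this already contains Theorem~\ref{t14}. It uses only the greedy system, and you carry out the branching construction explicitly instead of citing it. Your robust set $N$ is essentially the complement of $\mathcal{W}_Q$, and the remaining step (a countable union of affine preimages of a null set is null) is common to both proofs.

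Two details need correcting, though neither breaks the argument.

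First, the inclusion $S\subset[0,r_Q)$ is false in general. For $q_0=q_1=3/2$ one has $\frac{1}{q_0(q_1-1)}=\frac43>1=r_Q$. This is harmless, because $S\cap[0,r_Q)=[\frac{1}{q_1},\min\{r_Q,\frac{1}{q_0(q_1-1)}\})$ is still nondegenerate (using $q_0>1$ and the strict inequality). Hence $\mu_{g,Q}(S)>0$ by Theorem~\ref{t12}.

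Second, the vanishing of $\mu_{g,Q}$ on $[r_Q,\frac{1}{q_1-1}]$ does not by itself force Lebesgue-a.e.\ point there to enter $[0,r_Q)$. Use Lemma~\ref{l52} instead, or note directly that $G=G_1$ on this interval and $\frac{1}{q_1-1}-G_1(x)=q_1\bigl(\frac{1}{q_1-1}-x\bigr)$. So every orbit other than the fixed point $\frac{1}{q_1-1}$ reaches $[0,r_Q)$ after finitely many steps.
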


Our work is based on the important papers of Lasota and Yorke \cite{LasotaYorke1982,LasotaYorke1973}.
We introduce the Frobenius--Perron transfer operators associated with the greedy and lazy maps in the double-base
setting, denoted by $\mathcal{G}$ and $\mathcal{L}$. 
Our approach to the existence of absolutely continuous invariant probability measures is based on a linear conjugacy which rescales the state space $I_Q=[0,1/(q_1-1)]$ to $[0,1]$.
After conjugation, the resulting maps are piecewise $C^2$ expanding on $[0,1]$,
so that classical results of Lasota--Yorke yield invariant densities. 
Pushing these measures forward through the conjugacy produces invariant probability measures for $G$ and $L$ on $I_Q$, which are absolutely continuous with respect to Lebesgue measure. Moreover, by invariance of exactness under measurable
conjugacy, the corresponding dynamical systems are exact. 
In the spirit of Parry \cite{Parry1960}, we further construct explicit step representations of the associated (non-normalized) invariant densities, after normalization these give
the probability densities of the two absolutely continuous invariant probability measures.

\section{Proof of Theorem~\ref{t11}} \label{s2}

Our proofs are based on a classical theorem of Lasota and Yorke \cite{LasotaYorke1982} that we recall for the convenience of the reader.

Let $T$ be a doubly measurable transformation of the unit interval $[0, 1]$ into itself, satisfying the following conditions:
\begin{enumerate}
\item[(a)] there is a partition $0=a_0<\cdots<a_N=1$ such that for each integer $k=1,\ldots ,N$ the restriction of $T$ to the interval $[a_{k-1},a_k)$ is continuous and convex,

\item[(b)] $T_{a_{k-1}}=0$ and $T'_{a_{k-1}}>0$ for $k=1,\ldots ,N$, and

\item[(c)] $T'(0)>1$.
\end{enumerate}
Let $P$ denote the Frobenius--Perron operator associated with $T$, defined by the formula
\begin{equation*}
\int_APf\, dx=\int_{T^{-1}(A)}f\, dx
\end{equation*}
for all $f\in L^1([0,1])$.
Note that $P$ is linear, preserves the integral, and is contractive: $\norm{Pf}_{L^1}\le\norm{f}_{L^1}$ for all $f\in L^1([0,1])$.

Under these assumptions the following theorem holds:

\begin{theorem}[\cite{LasotaYorke1982}, Theorem 4]\label{t21}\  
\begin{enumerate}
\item[(i)] There exists a unique normalized absolutely continuous measure $\mu_g$ that is invariant under $T$.

\item[(ii)] The system $\bj{[0,1],\mu_g,T}$ is exact,  and the density $g=d\mu_g/dx$ is bounded and non-increasing.

\item[(iii)] Moreover, $\norm{P^nf-g}_1\to 0$ as $n\to\infty$  for every nonnegative function $f\in L^1([0,1])$ of integral one (with respect to the Lebesgue measure).
\end{enumerate}
\end{theorem}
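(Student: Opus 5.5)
The statement to prove is Theorem~\ref{t21}, the Lasota--Yorke result. I would prove it by working in the cone
\begin{equation*}
D_0=\set{f\in L^1([0,1]) : f\ge 0,\ f \text{ non-increasing},\ \int f=1}
\end{equation*}
and showing that the Frobenius--Perron operator $P$ maps $D_0$ into itself and eventually bounds it uniformly.

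First I would write $P$ explicitly. Let $\psi_k:T([a_{k-1},a_k))\to[a_{k-1},a_k)$ be the inverse branches. Then
\begin{equation*}
Pf(x)=\sum_{k=1}^N f(\psi_k(x))\,\psi_k'(x)\,1_{T([a_{k-1},a_k))}(x).
\end{equation*}
By (a) and (b) each branch is convex and increasing and starts at $0$. Hence every image $T([a_{k-1},a_k))$ is an interval $[0,c_k)$, and every $\psi_k$ is increasing and concave, so $\psi_k'$ is non-increasing. If $f\in D_0$, each summand is a product of nonnegative non-increasing functions, restricted to an initial interval. Therefore $Pf$ is non-increasing, and $P(D_0)\subset D_0$.

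Next comes the a priori bound. For $f\in D_0$ we have $f(y)\le 1/y$, since $y f(y)\le\int_0^y f\le 1$. For $k\ge 2$, $\psi_k(x)\ge a_{k-1}\ge a_1$, so those terms are at most $\psi_k'(x)/a_1$. The $\psi_k'$ are bounded because $T'(a_{k-1})>0$ and convexity gives $T'\ge T'(a_{k-1})$ on each piece. For $k=1$, condition (c) gives $\psi_1'(x)\le\lambda^{-1}<1$, where $\lambda=T'(0)$. Together these yield a Lasota--Yorke type inequality
\begin{equation*}
\sup Pf\le \lambda^{-1}\sup f+K ,
\end{equation*}
valid at least for bounded $f\in D_0$. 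I would handle $f$ that are unbounded near $0$ by first approximating by bounded ones, or by iterating once more. Iterating the inequality gives $\limsup_n \sup P^nf\le M:=K/(1-\lambda^{-1})$ uniformly on $D_0$. I expect this step to be the main obstacle: the $k=1$ term needs the contraction $\lambda^{-1}$ uniformly near $0$, while the other terms only need boundedness of $\psi_k'$, and the possible unboundedness of $f$ at $0$ has to be controlled. If the sup-norm version fails, I would try instead $\sup_x xPf(x)$ together with the value at $0$.

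Then existence and the properties in (ii). The Cesàro averages $\frac1n\sum_{j<n}P^jf$ of $f=1$ lie in $D_0$ and are eventually bounded by $M+1$. They are monotone and uniformly bounded, so by Helly's selection theorem they have an $L^1$-convergent subsequence. The limit $g$ is a fixed point of $P$ in $D_0$, hence bounded by $M$ and non-increasing.

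Finally, the lower function, which gives exactness, uniqueness and (iii). A non-increasing density $h$ with $h\le M+1$ satisfies $h\ge \tfrac12$ on $[0,a]$ with $a=1/(2(M+1))$. Indeed, $\int_a^1h\ge 1-(M+1)a=\tfrac12$ and $h(a)\ge\int_a^1 h$. So the function $\tfrac12 1_{[0,a]}$ is a nontrivial lower function: for every $f\in D_0$, $\norm{(P^nf-\tfrac12 1_{[0,a]})^-}_1\to0$. By the Lasota--Yorke lower-function theorem, $P$ is asymptotically stable on all of $D_0$. Since $D_0$ is dense in the set of all densities (approximate by step functions, each a combination of indicators $1_{[0,b)}$) and $P$ is an $L^1$ contraction, asymptotic stability extends to every density. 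This gives (iii), and uniqueness in (i) follows from it. Asymptotic stability of $P$ is equivalent to exactness of $([0,1],\mu_g,T)$, by the Lin/Lasota--Mackey criterion, and this yields (ii).
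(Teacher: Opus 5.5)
Your first steps are sound: $P(D_0)\subset D_0$, the estimate $\sup Pf\le\lambda^{-1}\sup f+K$ for bounded $f\in D_0$, a bounded non-increasing fixed point via Helly, and the pointwise bound $h\ge\frac12$ on $[0,a]$ for non-increasing densities $h\le M+1$. The paper itself gives no proof; it only cites Lasota--Yorke.

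\textbf{The gap} is the sentence ``By the Lasota--Yorke lower-function theorem, $P$ is asymptotically stable on all of $D_0$''. That theorem needs $\|(P^nf-h)^-\|_1\to0$ for all $f$ in a subset dense in the set $D$ of \emph{all} densities. Its proof applies the lower bound to the normalized positive and negative parts of $P^n(f_1-f_2)$, and these are arbitrary densities, not elements of $D_0$. But $D_0$ is not dense in $D$: every non-increasing density $u$ has $\int_{1/2}^1u\le\frac12$, hence $\|u-2\cdot 1_{[1/2,1]}\|_1\ge\frac12$. Your step-function remark only shows that the \emph{linear span} of $D_0$ is dense in $L^1$, since you need differences such as $1_{[a,b)}=1_{[0,b)}-1_{[0,a)}$. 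That suffices to pass from convergence on $D_0$ to all of $L^1$ by linearity, but it cannot replace the density hypothesis of the lower-bound theorem. Take the identity operator on $\mathbb{R}^2$: it leaves invariant the closed convex set $\{(p,1-p):\frac13\le p\le\frac23\}$, which spans $\mathbb{R}^2$ and whose elements all dominate $(\frac13,\frac13)$, yet the identity is not asymptotically stable. So uniqueness, (iii) and exactness are not yet proved.

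\textbf{What is missing} is dynamical input showing that a \emph{dense} class of densities is eventually carried into $D_0$. Call the maximal intervals on which $T^n$ is continuous the $n$-cylinders.
\begin{enumerate}
\item By induction, using $T(a_{k-1})=0$, $T^n$ maps each $n$-cylinder onto an interval $[0,\xi)$ via a composition of increasing convex branches, whose inverse is concave.
\item Hence $P^nf\in D_0$ whenever $f$ is non-increasing on every $n$-cylinder, and your argument gives the lower bound for such $f$. You must first truncate $f$ to make it bounded; ``iterating once more'' does not remove a singularity at $0$, because $Pf\ge\psi_1'\,f\circ\psi_1$. Note also that the time needed to get below $M+1$ is not uniform on $D_0$.
\item These densities are dense in $D$ once the maximal length of $n$-cylinders tends to $0$, i.e.\ once $T$ has no homterval.
\end{enumerate}

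For the double-base maps this last point is immediate from the slopes $q_0,q_1>1$; the paper uses it in Lemma~\ref{l36}. In the generality of Theorem~\ref{t21}, however, the branches $k\ge2$ may contract, so you need an argument. One option:
\begin{enumerate}
\item $T(x)/(x-a_{k-1})$ is non-decreasing on each branch.
\item So for a homterval, the ratio of the endpoints of its $n$-th image never decreases.
\item It grows by a factor bounded away from $1$ at every visit to a branch $k\ge2$. Such visits happen infinitely often, because $T(x)\ge\lambda x$ on $[0,a_1)$.
\item Yet at those visits the ratio stays below $1/a_1$, which is a contradiction.
\end{enumerate}
With this added, the lower-bound theorem applies and the rest of your argument goes through.
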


\begin{remark}\label{r22}
It follows from Theorem \ref{t21} and the continuity of $P$ that the density function is a fixed point of $P$:
\begin{equation*}
Pg=P(\lim_nP^nf)=\lim_nP^{n+1}f=g.
\end{equation*}

Conversely, if $Ph=h$ for some nonnegative function $h\in L^1([0,1])$ with $\norm{h}_{L^1}>0$, then $h$ is a constant multiple of $g$ because
\begin{equation*}
g=\lim_nP^n\bj{\norm{h}_{L^1}^{-1}h}=\norm{h}_{L^1}^{-1}\lim_nP^nh
=\norm{h}_{L^1}^{-1}\lim_nh=\norm{h}_{L^1}^{-1}h.
\end{equation*}
\end{remark}

\begin{proof}[Proof of Theorem~\ref{t11}]
Let us introduce the linear bijection $\phi:[0,1]\to I_Q$ given by the formula $\phi(u):=u/(q_1-1)$, and the conjugate map
\begin{equation*}
\widetilde G:=\phi^{-1}\circ G\circ \phi:[0,1]\to[0,1].
\end{equation*}
A direct computation gives
\begin{equation*}
\widetilde G(u)=
\begin{cases}
q_0u, & u\in[0,a_1),\\[1mm]
q_1u-(q_1-1), & u\in[a_1,1],
\end{cases}
\end{equation*}
where $a_1=(q_1-1)/q_1$. 

Take the partition $0=a_0<a_1<a_2=1$. For each integer $k=1,2$ the restriction of $\widetilde G$ to the interval $[a_{k-1},a_k)$ is linear, hence continuous and convex.
Moreover,
\begin{equation*}
\widetilde G(a_0)=\widetilde G(0)=0,\qquad \widetilde G(a_1)=q_1a_1-(q_1-1)=0,
\end{equation*}
and
\begin{equation*}
\widetilde G'(a_0)=q_0>1,\qquad \widetilde G'(a_1)=q_1>0.
\end{equation*}
By Theorem \ref{t21} there exists a $\widetilde G$-invariant probability
measure $\mu_{\widetilde G}$ on $[0,1]$ which is absolutely continuous with respect to Lebesgue measure $m_1$, and the system $([0,1],\Sigma_{[0,1]},\mu_{\widetilde G},\widetilde G)$ is exact.

Let $m$ be the Lebesgue measure on $I_Q$ and $d\mu_{\widetilde G}:=\widetilde f(u)\,dm_1$. 
The formula
\begin{equation*}
\mu_{g,Q}(E):=\mu_{\widetilde G}(\phi^{-1}E),\quad E\in\Sigma_{[0,1]}
\end{equation*}
defines a probability measure on $I_Q$ because $\mu_{g,Q}(I_Q)=\mu_{\widetilde G}([0,1])=1$.
Furthermore, $\mu_{g,Q}$ is $G$-invariant because
\begin{align*}
\mu_{g,Q}(G^{-1}E)
&=\mu_{\widetilde G}\bigl(\phi^{-1}(G^{-1}E)\bigr)
=\mu_{\widetilde G}\bigl((G\circ\phi)^{-1}(E)\bigr) \\
&=\mu_{\widetilde G}\bigl((\phi\circ\widetilde G)^{-1}(E)\bigr)
=\mu_{\widetilde G}\bigl(\widetilde G^{-1}(\phi^{-1}E)\bigr)
=\mu_{\widetilde G}(\phi^{-1}E)
=\mu_{g,Q}(E)
\end{align*}
for all $E\in\Sigma_{[0,1]}$, where we used the conjugacy relation $G\circ\phi=\phi\circ\widetilde G$ and the
$\widetilde G$-invariance of $\mu_{\widetilde G}$. 
By the change of variable $x=u/(q_1-1)$, we have
\begin{equation*}
\mu_{g,Q}(E)=\mu_{\widetilde G}(\phi^{-1}E)=\int_{\phi^{-1}E}\widetilde f(u)\,dm_1(u)=\int_E(q_1-1)\widetilde f\bigl((q_1-1)x\bigr)\,dm(x).
\end{equation*}
This shows that $\mu_{g,Q}$ is absolutely continuous with respect to the Lebesgue measure on $I_Q$.

Since exactness is invariant under measure-theoretical isomorphisms \cite{Rokhlin1967}, we conclude that
\begin{equation*}
([0,1],\Sigma_{[0,1]},\mu_{\widetilde G},\widetilde G)\ \text{is exact}
\quad\Longleftrightarrow\quad
(I_Q,\Sigma_{I_Q},\mu_{g,Q},G)\ \text{is exact}.
\end{equation*}
Therefore $(I_Q,\Sigma_{I_Q},\mu_{g,Q},G)$ is exact. 
This completes the proof for $G$.
The argument for $L$ is analogous.
\end{proof}

\begin{remark}\label{r23}
The proof of Theorem 1.1 is not specific to the double-base setting: it remains valid for all regular alphabet-base systems in the terminology of \cite{Komornik2022}.
%
%The proof of Theorem 1.1 is not specific to the double-base setting. 
%After an affine rescaling, the maps become piecewise affine expanding maps on the unit interval, so the same argument should extend to more general multiple-base settings.
\end{remark}

\section{Proof of Theorem~\ref{t12}} \label{s3}

We consider only the case of the greedy map; the proof for the lazy map is analogous.
For the proof first we need a special partition of $I_Q$ into a family of pairwise disjoint intervals. 
We recall that
\begin{equation*}
I_0=\left[0, \frac{1}{q_1}\right),\quad
I_1=\left[\frac{1}{q_1}, \frac{1}{q_1-1}\right],\quad
G^{-1}_0(x)=\frac{x}{q_0}\qtq{and}G^{-1}_1(x)=\frac{x+1}{q_1}.
\end{equation*} 
Starting with $I_0$ and $I_1$, we define recursively the intervals $I_{\bw}$ for all words
$\bw=w_1\cdots w_n\in\{0,1\}^n$ of length $n\geq 2$ by the formula
\begin{equation*}
I_{\bw}:=G^{-1}_{w_1\cdots w_{n-1}}(I_{w_n})\cap I_{w_1\cdots w_{n-1}}.
\end{equation*}        
Here $G^{-1}_{w_1\cdots w_{n-1}}(x)=G^{-1}_{w_1}\circ\cdots \circ G^{-1}_{w_{n-1}}(x)$.
For example,  $I_{00}=[0,\frac{1}{q_0q_1})$ and $I_{01}=[\frac{1}{q_0q_1},\frac{1}{q_1})$.

\begin{lemma}\label{l31}
For each $n=1,2,\ldots,$ the intervals $I_{\bw}$ for words of length $n$ form a disjoint partition of $I_Q$.
\end{lemma}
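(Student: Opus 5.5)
We argue by induction on $n$, but first we identify the sets $I_{\bw}$ dynamically, since that makes the partition property almost immediate. The claim is that for every word $\bw=w_1\cdots w_n$,
\begin{equation*}
I_{\bw}=\set{x\in I_Q : G^{k-1}(x)\in I_{w_k}\ \text{for } k=1,\ldots,n},
\end{equation*}
that is, $I_{\bw}$ is the set of points whose greedy expansion begins with $\bw$. We prove this by induction on $n$; the case $n=1$ is the definition.

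For the inductive step, assume the identity for $w_1\cdots w_{n-1}$. On $I_{w_1\cdots w_{n-1}}$ the iterate $G^{n-1}$ equals the affine bijection $G_{w_{n-1}}\circ\cdots\circ G_{w_1}$. Its inverse is $G^{-1}_{w_1}\circ\cdots\circ G^{-1}_{w_{n-1}}=G^{-1}_{w_1\cdots w_{n-1}}$, where $G^{-1}_j$ denotes the affine inverse of $G_j$ on all of $\mathbb R$.

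Take $x\in I_{w_1\cdots w_{n-1}}$ and set $y=G^{n-1}(x)$. Then $x=G^{-1}_{w_1\cdots w_{n-1}}(y)$. Hence $y\in I_{w_n}$ if and only if $x\in G^{-1}_{w_1\cdots w_{n-1}}(I_{w_n})$. Intersecting with $I_{w_1\cdots w_{n-1}}$ gives exactly the defining formula for $I_{\bw}$, which completes the induction.

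With this identification the lemma follows directly. Since $G$ maps $I_Q$ into $I_Q$ and $\set{I_0,I_1}$ partitions $I_Q$, every $x\in I_Q$ determines a unique sequence $b_k$ with $G^{k-1}(x)\in I_{b_k}$. Therefore $x$ lies in $I_{b_1\cdots b_n}$ and in no other $I_{\bw}$ with $|\bw|=n$. So the nonempty sets among the $I_{\bw}$ with $|\bw|=n$ are pairwise disjoint and cover $I_Q$.

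The same induction also shows that each $I_{\bw}$ is an interval, possibly empty. It is the intersection of the interval $I_{w_1\cdots w_{n-1}}$ with the affine image of the interval $I_{w_n}$. Some words give empty sets, because the image $G^{n-1}(I_{w_1\cdots w_{n-1}})$ may miss $I_{w_n}$; the statement should be read as "the nonempty ones form a partition".

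The main point needing care is the restriction $G^{n-1}=G_{w_{n-1}}\circ\cdots\circ G_{w_1}$ on $I_{w_1\cdots w_{n-1}}$. This holds because, by the inductive description, $G^{k-1}(x)\in I_{w_k}$ for each $k$, so at every step the correct branch of $G$ is applied. Beyond this, only the endpoint conventions (half-open $I_0$, closed $I_1$) need to be respected, and they are, since the inverse images are taken under the full affine maps.
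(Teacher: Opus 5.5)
Your proof is correct, but it is organized differently from the paper's. The paper never identifies $I_{\bw}$ dynamically. It works directly from the recursive definition and proves disjointness and covering separately by induction:
\begin{itemize}
\item if two words of length $n$ share the prefix $w_1\cdots w_{n-1}$, a common point $x$ would satisfy $G_{w_{n-1}}\circ\cdots\circ G_{w_1}(x)\in I_0\cap I_1=\emptyset$;
\item covering follows from $I_{w_1\cdots w_n0}\cup I_{w_1\cdots w_n1}=G^{-1}_{w_1\cdots w_n}(I_Q)\cap I_{w_1\cdots w_n}=I_{w_1\cdots w_n}$.
\end{itemize}
You instead first show that $I_{\bw}=\set{x\in I_Q : G^{k-1}(x)\in I_{w_k},\ k=1,\ldots,n}$ is the greedy cylinder. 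After that, the partition property is just uniqueness of the greedy digit sequence.

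The paper's route is shorter on the page. However, its last equality tacitly needs $G_{w_n}\circ\cdots\circ G_{w_1}(I_{w_1\cdots w_n})\subseteq I_Q$. That is exactly what your identification supplies: on $I_{w_1\cdots w_n}$ the composite branch equals $G^n$, and $G$ maps $I_Q$ into itself.

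Your version also gives, at no extra cost, facts the paper later uses without comment. One is $J_{\bw}=G^n(I_{\bw})$. Another is that $G^N$ restricted to $I_{\bw}$ is the affine branch, which underlies the expansion $\mathcal{G}^N h=\sum_{\bw}A_{\bw}^{-1}\mathbf 1_{J_{\bw}}\, h\circ G^{-1}_{\bw}$ in Lemma~\ref{l36}.

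Your remark that some $I_{\bw}$ may be empty is also correct, and it is consistent with Lemma~\ref{l33}, which allows $J_{\bw}=\emptyset$. For example, with $q_0=q_1=3/2$ one gets $J_{01}=[0,1/2)$, which misses $I_1=[2/3,2]$, so $I_{011}=\emptyset$.
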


\begin{proof}
First we show that the intervals $I_{\bw}$ and $I_{\bv}$ for two different words $w_1\cdots w_n$ and $v_1\cdots v_n$ of length $n$ are  disjoint.

This clearly holds for $n=1$ by a direct inspection of $I_0$ and $I_1$.
Proceeding by induction, let $n\ge 2$, and assume that the property is true for $n-1$.

If $w_1\cdots w_{n-1}\ne v_1\cdots v_{n-1}$, then $I_{w_1\cdots w_{n-1}}$ and $I_{v_1\cdots v_{n-1}}$ are disjoint by the induction hypothesis.
Since $I_{\bw}\subset I_{w_1\cdots w_{n-1}}$ and $I_{\bv}\subset I_{v_1\cdots v_{n-1}}$ by construction, then the intervals $I_{\bw}$ and $I_{\bv}$ are also disjoint.

If $w_1\cdots w_{n-1}=v_1\cdots v_{n-1}$, then $w_n\ne v_n$.
Assume on the contrary that there exists a point $x\in  I_{\bw}\cap I_{\bv}$.  
Then we have
\begin{equation*}
x\in G^{-1}_{w_1\cdots w_{n-1}}(I_1)\cap G^{-1}_{w_1\cdots w_{n-1}}(I_0)
\end{equation*}
by the definition of $I_{\bw}$ and $I_{\bv}$.
This implies that $G_{w_{n-1}\cdots w_1}(x)\in I_0\cap I_1$, contradicting the disjointness of $I_0$ and $I_1$.
    
We show that
\begin{equation*}
I_Q=\cup_{\bw\in\{0,1\}^n}I_{\bw}
\end{equation*}
for every $n$.
This holds for $n=1$ because $I_Q=I_0\cup I_1$ by a direct inspection.
Proceeding by induction, if the equality holds for some $n\ge 1$, then it also holds for $n+1$ in place of $n$ because
\begin{equation*}
I_{w_1\cdots w_n0}\cup I_{w_1\cdots w_n1}=G^{-1}_{w_1\cdots w_n}(I_{Q})\cap I_{w_1\cdots w_n}=I_{w_1\cdots w_n}
\end{equation*}  
for every word $w_1\cdots w_n$ of length $n$.
\end{proof}

Next we study the intervals 
\begin{equation*}
J_{\bw}:=G_{w_{n}}\circ\cdots \circ G_{w_{1}}(I_{\bw}).
\end{equation*}

\begin{example}\label{e32}
We consider the intervals $I_{\bw}$ and $J_{\bw}$ for the words $\bw\in\{0,1\}^N$.

If $N =1$, then
\begin{equation*}
I_0=\left[0,\frac{1}{q_1}\right),\quad
I_1=\left[\frac{1}{q_1},\frac{1}{q_1-1}\right],\quad
J_{0}=\left[0,r_Q\right)\qtq{and}
J_{1}=\left[0,\frac{1}{q_1-1}\right].
\end{equation*}
If $N=2$, then
\begin{multline*}
I_{00}=\left[0,\frac{1}{q_0q_1}\right),\quad
I_{01}=\left[\frac{1}{q_0q_1},\frac{1}{q_1}\right),\quad
I_{10}=\left[\frac{1}{q_1},\frac{1}{q_1}+\frac{1}{q_1^2}\right),
I_{11}=\left[\frac{1}{q_1}+\frac{1}{q_1^2},\frac{1}{q_1-1}\right],
\end{multline*}
and
\begin{equation*}
J_{00}=[0,r_Q),\quad
J_{01}=[0,q_0-1),\quad
J_{10}=[0,r_Q),\quad
J_{11}=\left[0,\frac{1}{q_1-1}\right].
\end{equation*}
\end{example}

\begin{lemma}\label{l33}
Let $\bw=\bu w_{N}\in\{0,1\}^N$ with $\bu=w_1\cdots w_{N-1}$.

\begin{enumerate}
\item[(i)] If $\bw=1^N$ then $J_{\bw}=I_Q=[0,\frac{1}{q_1-1}]$. 

\item[(ii)] If $\bw\neq 1^N$ and $J_{\bw}\neq\emptyset$, then $J_{\bw}=[0,\xi_{\bw})$ for some $\xi_{\bw}\in(0,\frac{1}{q_1-1})$.
\end{enumerate} 
\end{lemma}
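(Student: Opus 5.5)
We argue by induction on $N$, using the recursion that follows directly from the definitions. Since $I_{\bw}=G^{-1}_{\bu}(I_{w_N})\cap I_{\bu}$ and $G_{\bu}$ (the composition $G_{w_{N-1}}\circ\cdots\circ G_{w_1}$) is an increasing affine bijection mapping $I_{\bu}$ onto $J_{\bu}$, we get
\begin{equation*}
J_{\bw}=G_{w_N}\bigl(G_{\bu}(I_{\bw})\bigr)=G_{w_N}\bigl(I_{w_N}\cap J_{\bu}\bigr).
\end{equation*}
Thus everything reduces to computing $G_0(I_0\cap J)$ and $G_1(I_1\cap J)$ for intervals $J$ of the two admissible shapes, namely $J=I_Q$ and $J=[0,\xi)$ with $0<\xi<\frac{1}{q_1-1}$. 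The base case $N=1$ is contained in Example~\ref{e32}: $J_1=I_Q$ and $J_0=[0,r_Q)$. Here $0<r_Q<\frac{1}{q_1-1}$, because $q_0<q_1/(q_1-1)$ is equivalent to $q_0q_1<q_0+q_1$; in the equality case $q_0+q_1=q_0q_1$ one should double-check whether $r_Q=\frac1{q_1-1}$ is allowed. I would read the claim $\xi_{\bw}<\frac{1}{q_1-1}$ as possibly non-strict in that case, or note that the half-open interval $[0,\xi)$ still differs from the closed $I_Q$.

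For the induction step with $J_{\bu}=I_Q$ (that is, $\bu=1^{N-1}$), we compute $G_1(I_1)=I_Q$, which gives (i) for $\bw=1^N$, and $G_0(I_0)=[0,r_Q)$, which is of the required form. Now suppose $J_{\bu}=[0,\xi)$ with $\xi>0$.

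\emph{Case $w_N=0$.} We have $I_0\cap[0,\xi)=[0,\min(\xi,1/q_1))$, which is nonempty. Hence $J_{\bw}=[0,q_0\min(\xi,1/q_1))$, and $q_0\min(\xi,1/q_1)\le r_Q<\frac{1}{q_1-1}$ (with the same caveat in the boundary case).

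\emph{Case $w_N=1$.} We have $I_1\cap[0,\xi)=[1/q_1,\xi)$. This is empty if $\xi\le 1/q_1$, which is exactly why the statement includes the hypothesis $J_{\bw}\ne\emptyset$. Otherwise $J_{\bw}=[0,q_1\xi-1)$ with $q_1\xi-1>0$ and $q_1\xi-1<\frac{q_1}{q_1-1}-1=\frac1{q_1-1}$.

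Finally, we must ensure the induction only ever passes through nonempty $J_{\bu}$. If $J_{\bu}=\emptyset$, then $I_{\bu}=\emptyset$, so $I_{\bw}=\emptyset$ and $J_{\bw}=\emptyset$; this case is excluded by hypothesis. Therefore, whenever $J_{\bw}\neq\emptyset$, the prefix $J_{\bu}$ is also nonempty and the inductive hypothesis applies.

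The only delicate point is bookkeeping of the endpoints: the right endpoint is open, while $I_1$ is closed and contains $\frac1{q_1-1}$. This endpoint behaviour is what distinguishes case (i) from case (ii). Beyond that, the argument is a routine case check.
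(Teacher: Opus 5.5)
Your proof is correct and follows the paper's argument essentially verbatim: induction on $N$ via the recursion $J_{\bw}=G_{w_N}(I_{w_N}\cap J_{\bu})$, with the same case split on $w_N$ and on whether $J_{\bu}=I_Q$ or $J_{\bu}=[0,\xi_{\bu})$; you are also more careful than the paper about the empty cases ($J_{\bu}=\emptyset$, or $\xi_{\bu}\le 1/q_1$ when $w_N=1$). Your caveat about the boundary case is justified: when $q_0+q_1=q_0q_1$ one has $\xi_0=r_Q=\frac{1}{q_1-1}$, and indeed every $J_{\bw}$ with $\bw\ne 1^N$ is then $[0,\frac{1}{q_1-1})$, so only the non-strict bound $\xi_{\bw}\le\frac{1}{q_1-1}$ holds there, a point the paper's base case passes over silently.
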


\begin{proof}
The property holds for $N=1$ because 
\begin{equation*}
J_0=G_0(I_0)=[0,r_Q)\text{ and }J_1=G_1(I_1)=I_Q.
\end{equation*}

Proceeding by induction, let $N\ge 2$, and assume that the statement holds for $N-1$. 
Since
\begin{equation*}
I_{\bu0}=G^{-1}_{w_1\cdots w_{N-1}}(I_0)\cap I_{w_1\cdots w_{N-1}},
\end{equation*}
we have
\begin{equation*}
J_{\bu0}=G_0\circ G_{w_{N-1}}\circ\cdots\circ G_{w_1}(I_{\bu0})=G_0(I_0\cap J_{\bu}).
\end{equation*}
Similarly, $J_{\bu1}=G_1(I_1\cap J_{\bu})$.

If $w_N=0$, then there are two subcases:
\begin{itemize}
    \item If $\bu=1^{N-1}$ , then \(J_{\bw}=J_{\bu0}=G_0(I_0\cap J_{\bu})= G_0(I_0)=[0,r_Q)\).
    \item If $\bu\neq 1^{N-1}$ and $J_{\bu}\neq\emptyset$, then $J_{\bw}=[0,\min\{r_Q,q_0\xi_{\bu}\})$.
\end{itemize}

If $w_N=1$, then there are again two subcases:
\begin{itemize}
\item If \(J_{\bu}\ne I_Q\), then the inductive hypothesis gives
\(J_{\bu}=[0,\xi_{\bu})\) with \(\xi_{\bu}<\frac{1}{q_1-1}\).
Then \(I_1\cap J_{\bu}=[1/q_1,\xi_{\bu})\) is right-open, hence \(J_{\bw}=[0,q_1 \xi_{\bu}-1)\) and $q_1 \xi_{\bu}-1<1/(q_1-1)$.

\item If \(J_{\bu}=I_Q\), then \(I_1\cap J_{\bu}=I_1\) and
\(J_{\bw}=G_1(I_1)=I_Q\). \qedhere
\end{itemize}
\end{proof}

\begin{lemma}\label{l34}
For every finite word $\bu\in\{0,1\}^*$ there exists an integer $m\ge 1$ such that $J_{u0^m}=[0,r_Q)$.
\end{lemma}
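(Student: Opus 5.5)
The plan is to iterate the recursion $J_{\bv0}=G_0(I_0\cap J_{\bv})$ established in the proof of Lemma~\ref{l33}, and to use that $G_0$ multiplies lengths by $q_0>1$.

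Throughout, I assume $J_{\bu}\neq\emptyset$, i.e.\ $I_{\bu}\neq\emptyset$; this is the only meaningful case, since $J_{\bu}=\emptyset$ forces $J_{\bu0^m}=\emptyset$ for all $m$. By Lemma~\ref{l33} there are two cases.

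If $J_{\bu}=I_Q$, then $I_0\cap J_{\bu}=I_0$. Hence $J_{\bu0}=G_0(I_0)=[0,r_Q)$, and $m=1$ works.

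Otherwise $J_{\bu}=[0,\xi_{\bu})$ with $0<\xi_{\bu}<\frac1{q_1-1}$. I would prove by induction on $k\ge1$ that
\begin{equation*}
J_{\bu0^k}=\bigl[0,\min\{r_Q,\,q_0^k\xi_{\bu}\}\bigr).
\end{equation*}
For $k=1$, note that $I_0\cap[0,\xi_{\bu})=[0,\min\{1/q_1,\xi_{\bu}\})$. Applying $G_0(x)=q_0x$ gives $[0,\min\{q_0/q_1,q_0\xi_{\bu}\})=[0,\min\{r_Q,q_0\xi_{\bu}\})$.

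For the inductive step, suppose $J_{\bu0^k}=[0,\eta)$ with $\eta=\min\{r_Q,q_0^k\xi_{\bu}\}>0$. The same computation gives $J_{\bu0^{k+1}}=[0,\min\{r_Q,q_0\eta\})$. Then
\begin{equation*}
\min\{r_Q,q_0\eta\}=\min\{r_Q,\,q_0r_Q,\,q_0^{k+1}\xi_{\bu}\}=\min\{r_Q,\,q_0^{k+1}\xi_{\bu}\},
\end{equation*}
because $q_0r_Q>r_Q$. All intervals stay nonempty since $\xi_{\bu}>0$.

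Since $q_0>1$ and $\xi_{\bu}>0$, we have $q_0^k\xi_{\bu}\to\infty$. So there is a smallest $m\ge1$ with $q_0^m\xi_{\bu}\ge r_Q$, and for this $m$ the formula gives $J_{\bu0^m}=[0,r_Q)$.

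The only delicate point I expect is bookkeeping at the boundaries. One must check that the half-open form $[0,\cdot)$ is preserved at each step, which holds because $I_0$ is right-open, and that the cases $J_{\bu}=I_Q$ and $J_{\bu}=[0,\xi_{\bu})$ exhaust all nonempty $J_{\bu}$. Both follow directly from Lemma~\ref{l33}.
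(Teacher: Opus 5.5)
Your proof is correct and follows essentially the same route as the paper: the same case split into $J_{\bu}=I_Q$ (giving $m=1$) versus $J_{\bu}=[0,\xi_{\bu})$ via Lemma~\ref{l33}, then the same induction $J_{\bu0^k}=[0,\min\{r_Q,q_0^k\xi_{\bu}\})$ and the observation $q_0^k\xi_{\bu}\to\infty$. Your explicit restriction to $J_{\bu}\neq\emptyset$ is a small but real improvement. The paper invokes Lemma~\ref{l33}(ii) without noting that hypothesis, and the statement as written fails when $J_{\bu}=\emptyset$; for example, $\bu=011$ with $q_1(q_0-1)\le 1$, such as $q_0=q_1=3/2$, gives $J_{011}=\emptyset$.
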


\begin{proof}
If $J_u=I_Q$, then
\begin{equation*}
J_{u0}=G_0(I_0\cap J_u)=G_0(I_0)=[0,r_Q),
\end{equation*}
so that the claim holds with $m=1$.

If $J_u\neq I_Q$, then $J_u=[0,\xi_u)$ with some $\xi_u\in (0,\frac{1}{q_1-1})$ by Lemma \ref{l33}.
We prove by induction that
\begin{equation}\label{31}
J_{u0^m}=[0,\min\{r_Q,q_0^m\xi_u\})
\end{equation}
for every $m\ge 0$. 
This will imply the lemma because $q_0^m\to\infty$ and therefore $q_0^m\xi_u\ge r_Q$ if $m\ge 1$ is large enough.

The relation \eqref{31} holds for $m=0$ by our assumption.
If it holds for some $m\ge 0$, then, since $G_0(x)=q_0x$ and $r_Q=q_0/q_1$, we have
\begin{equation*}
J_{u0^{m+1}}
=G_0(I_0\cap J_{u0^m})
=G_0\bj{\left[0,\min\set{\frac{1}{q_1},q_0^m\xi_u}\right)}
=[0,\min\{r_Q,q_0^{m+1}\xi_u\}),
\end{equation*}
which proves the induction step.
\end{proof}

It follows from the definitions \eqref{greedy-map}--\eqref{lazy-map} of the greedy and lazy maps $G$ and $L$ that the corresponding Frobenius–-Perron operators $\mathcal{G}$ and $\mathcal{L}$ are given by the following formulas:
\begin{align}
\mathcal{G}f(x)&=\frac{1}{q_0}f\bj{\frac{x}{q_0}}1_{[0,r_Q)}(x)+\frac{1}{q_1}f\bj{\frac{x+1}{q_1}}\label{FP greedy},\\
\mathcal{L}f(x)&=\frac{1}{q_0}f\bj{\frac{x}{q_0}}+\frac{1}{q_1}f\bj{\frac{x+1}{q_1}}1_{(\ell_Q,\frac{1}{q_1-1}]}(x).\label{FP lazy}
\end{align}

\begin{lemma}\label{l35}
The invariant  densities  $h_{g,Q}$ and $h_{l,Q}$ vanish outside the intervals $[0,r_Q)$ and  $\left(\ell_Q,\frac{1}{q_1-1}\right]$, respectively.
\end{lemma}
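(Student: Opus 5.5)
The plan is to use the fixed-point equation $\mathcal{G}h_{g,Q}=h_{g,Q}$ from Remark~\ref{r22}, transported to $I_Q$ through the conjugacy $\phi$, together with the monotonicity from Theorem~\ref{t11}(ii) ($h_{g,Q}$ is non-increasing). The point is to show that $h:=h_{g,Q}$ vanishes a.e. on $[r_Q,\frac{1}{q_1-1}]$. For $x$ in this interval, \eqref{FP greedy} gives
\begin{equation*}
h(x)=\frac{1}{q_1}h\bj{\frac{x+1}{q_1}}=\frac{1}{q_1}h(G_1^{-1}x)\quad\text{a.e.},
\end{equation*}
because the first term of \eqref{FP greedy} is killed by $1_{[0,r_Q)}$.

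First I check that $G_1^{-1}$ maps $[r_Q,\frac1{q_1-1}]$ into itself. The interval is preserved because $G_1^{-1}(\frac1{q_1-1})=\frac1{q_1-1}$, and
\begin{equation*}
G_1^{-1}(r_Q)=\frac{q_0/q_1+1}{q_1}\ge\frac{q_0}{q_1}\iff q_0+q_1\ge q_0q_1 ,
\end{equation*}
which is exactly assumption \eqref{Q-assumption}. Iterating the identity then gives, for a.e. $x$ in the interval and every $n\ge1$,
\begin{equation*}
h(x)=q_1^{-n}h(G_1^{-n}x).
\end{equation*}
The iterates $G_1^{-n}x$ stay in $[r_Q,\frac1{q_1-1}]$. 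Countably many null sets are discarded along the way; this is harmless since $G_1^{-1}$ is affine and so preserves null sets.

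Next I use boundedness. Theorem~\ref{t11}(ii) says $h$ is bounded, say $h\le M$, so $h(x)\le M q_1^{-n}\to0$. Hence $h=0$ a.e. on $[r_Q,\frac1{q_1-1}]$, and $\mu_{g,Q}$ vanishes outside $[0,r_Q)$.

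The lazy case is symmetric. On $[0,\ell_Q]$, \eqref{FP lazy} gives $h_{l,Q}(x)=q_0^{-1}h_{l,Q}(x/q_0)$. The map $x\mapsto x/q_0$ clearly sends $[0,\ell_Q]$ into itself, and boundedness again gives $h_{l,Q}\le Mq_0^{-n}\to0$.

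The main obstacle is only the a.e. bookkeeping. The density is an $L^1$ class, so the pointwise identity must be justified, either by the preservation of null sets under affine maps or by choosing a bounded monotone representative. Otherwise the argument is straightforward; if boundedness were unavailable, I would instead integrate the identity over the interval and use $q_1>1$ to get $\int h\le q_1^{-1}\cdot q_1\int h$ with strict loss.
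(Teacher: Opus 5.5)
Your proof is correct and is essentially the paper's: you restrict $\mathcal{G}h_{g,Q}=h_{g,Q}$ to $[r_Q,\frac{1}{q_1-1}]$, iterate the branch $G_1^{-1}(x)=\frac{x+1}{q_1}$ (which keeps you in that interval), and use boundedness to get $h_{g,Q}\le Mq_1^{-n}\to 0$, with the lazy case handled via $x\mapsto x/q_0$ on $[0,\ell_Q]$; your null-set bookkeeping is a bit more explicit than the paper's. The fallback integration remark is unnecessary and, as phrased, off: the factor there is exactly $q_1^{-1}\cdot q_1=1$, so any gain must come from $G_1^{-1}\bigl([r_Q,\frac{1}{q_1-1}]\bigr)\subsetneq[r_Q,\frac{1}{q_1-1}]$ together with an iteration, not from $q_1>1$.
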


\begin{proof}
By symmetry we consider only $h_{g,Q}$.
Since $h_{g,Q}$ is a fixed point of the Frobenius--Perron operator $\mathcal{G}$ by Remark \ref{r22}, we infer from the relation \eqref{FP greedy} that
\begin{equation}\label{34}
h_{g,Q}(x)=\frac{1}{q_1}h_{g,Q}\bj{\frac{x+1}{q_1}}
\end{equation}
for almost every $x\in I:=[r_Q,\frac{1}{q_1-1}]$.
Starting from any $x_0\in I$, we may define a non-decreasing sequence $(x_n)$ in $I$ by the formula 
\begin{equation*}
x_{n+1}:=\frac{x_n+1}{q_1},\quad n=0,1,\ldots .
\end{equation*}
Indeed, if $x_n\in I$ for some $n\ge 0$, then 
\begin{equation*}
x_{n+1}-x_n=\frac{q_1-1}{q_1}\bj{\frac{1}{q_1-1}-x_n}\ge 0
\end{equation*}
and
\begin{equation*}
x_{n+1}\le \frac{\frac{1}{q_1-1}+1}{q_1}
=\frac{1+(q_1-1)}{(q_1-1)q_1}
=\frac{1}{q_1-1}.
\end{equation*}
We deduce from \eqref{34} by induction that
\begin{equation*}
h_{g,Q}(x_0)=\frac{1}{q_1^n}h_{g,Q}(x_n),\quad n=1,2,\ldots .
\end{equation*}
Since $q_1>1$ and the function $h_{g,Q}$ is bounded (up to a set of measure zero) by Theorem \ref{t11}, letting $n\to\infty$ we obtain that $h_{g,Q}(x_0)=0$ for a.e. $x_0\in I$.
\end{proof}

\begin{lemma}\label{l36}
The functions $h_{g,Q}$ and $h_{l,Q}$ are positive almost everywhere on $[0,r_Q)$ and $(\ell_Q,\frac{1}{q_1-1}]$, respectively. 
Moreover, there exist two positive constants $\delta_g,\delta_\ell>0$ such that
\begin{align}\label{35}
&m\big(\{x\in [0,r_Q):\; h_{g,Q}(x)<\delta_g\}\big)=0
\intertext{and}
&m\big(\{x\in \left(\ell_Q,\frac{1}{q_1-1}\right]:\;  h_{l,Q}(x)<\delta_\ell\}\big)=0.\notag
\end{align}
\end{lemma}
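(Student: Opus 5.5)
We argue only for $h_{g,Q}$; the lazy case is symmetric. The plan rests on Theorem~\ref{t11}(ii), which says $h_{g,Q}$ is non-increasing (after choosing a monotone representative). Since $h_{g,Q}$ has integral one, $h_{g,Q}>0$ on some nondegenerate interval, hence on $[0,\varepsilon)$ for some $\varepsilon>0$. By monotonicity, both claims follow once we show $h_{g,Q}$ is bounded below near the right endpoint $r_Q$. So it suffices to find $\delta_g>0$ with $h_{g,Q}(x)\ge\delta_g$ for a.e.\ $x\in[0,r_Q)$, and the second statement then contains the first.

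The main tool is the iterated fixed-point equation $h=\mathcal G^n h$. Unwinding \eqref{FP greedy}, for every $n$ and a.e.\ $x$,
\begin{equation*}
h_{g,Q}(x)=\sum_{\bw\in\{0,1\}^n}\frac{1}{q_1^{|\bw|_1}q_0^{n-|\bw|_1}}\,h_{g,Q}\bigl(G^{-1}_{\bw}(x)\bigr)1_{J_{\bw}}(x),
\end{equation*}
where $|\bw|_1$ is the number of ones and $G^{-1}_{\bw}(x)$ is the unique preimage of $x$ in $I_{\bw}$ under $G_{w_n}\circ\cdots\circ G_{w_1}$. This identity is justified by Lemma~\ref{l31}, since the cylinders $I_{\bw}$ partition $I_Q$ and $G^n$ maps $I_{\bw}$ linearly onto $J_{\bw}$. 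All terms are nonnegative, so it is enough to keep a single word. Choose $n_0$ so large that some cylinder $I_{\bu}$ with $|\bu|=n_0$ lies inside $[0,\varepsilon)$; for instance take $\bu=0^{n_0}$, since $I_{0^{n}}=[0,q_0^{-(n-1)}q_1^{-1})$. By Lemma~\ref{l34}, there is $m\ge1$ with $J_{\bu0^m}=[0,r_Q)$. Set $\bw=\bu0^m$ and $n=n_0+m$. Since $I_{\bw}\subset I_{\bu}\subset[0,\varepsilon)$, every $x\in[0,r_Q)$ has $G^{-1}_{\bw}(x)\in[0,\varepsilon)$. Hence $h_{g,Q}(x)\ge q_1^{-|\bw|_1}q_0^{-(n-|\bw|_1)}\,c$, where $c:=\operatorname{ess\,inf}_{[0,\varepsilon)}h_{g,Q}$.

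The step I expect to be delicate is making $c>0$ rigorous. Monotonicity alone only gives $h>0$ on $[0,\varepsilon)$, not a positive infimum. To fix this, I will replace $\varepsilon$ by a slightly larger $\varepsilon'$ with $h_{g,Q}\ge c'>0$ on $[0,\varepsilon')$; such $\varepsilon'$ exists because a non-increasing function that is positive on a set of positive measure is $\ge c'$ on some initial interval. Then I choose the cylinder inside $[0,\varepsilon')$. Taking the simplest word $\bw=0^{n}$ is cleanest. Here the key fact is that $J_{0^n}=[0,r_Q)$ for all $n\ge1$: by Lemma~\ref{l33} and the recursion $J_{\bu0}=G_0(I_0\cap J_{\bu})$ starting from $J_0=[0,r_Q)$, we get $J_{00}=G_0([0,\min\{1/q_1,r_Q\}))=[0,r_Q)$, because $r_Q\ge1/q_1$ follows from $q_0>1$. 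So we may even take $m=0$.

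With this choice we obtain $h_{g,Q}(x)\ge q_0^{-n}c'=:\delta_g$ for a.e.\ $x\in[0,r_Q)$. This gives \eqref{35} and positivity a.e.\ on $[0,r_Q)$. The lazy statement follows in the same way, using the mirror cylinders $1^n$ at the right endpoint, the non-decreasing property of $h_{l,Q}$, and the identity $J^{L}_{1^n}=(\ell_Q,\frac1{q_1-1}]$.
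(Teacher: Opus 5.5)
Your proof is correct, and it is a shortened version of the paper's argument. The core step is the same as the paper's final step: keep a single term of $h_{g,Q}=\mathcal{G}^n h_{g,Q}$ along a word $\bw$ whose cylinder $I_{\bw}$ lies where $h_{g,Q}$ is bounded below and whose image $J_{\bw}$ is all of $[0,r_Q)$.

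The paper first proves $m(\mathcal{A})=0$, where $\mathcal{A}$ is the zero set of $h_{g,Q}$ in $[0,r_Q)$. It does this by contradiction, using the inclusion $G_{\bw}^{-1}(\mathcal{A})\subset\mathcal{A}\cap I_{\bw}$, density of $\mathcal{A}$, monotonicity, and Lemma~\ref{l35}. Only afterwards does it extract $\varepsilon_0$ with $m(\mathcal{B})>0$.

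You observe, correctly, that this first stage is unnecessary. The existence of such an $\varepsilon_0$ needs only $\int h_{g,Q}\,dm=1$, and by monotonicity $\mathcal{B}$ is then an initial segment. Your explicit word $0^n$, with $J_{0^n}=[0,r_Q)$ for all $n$ because $r_Q>1/q_1$, replaces the appeal to Lemmas~\ref{l31} and~\ref{l34}. Lemma~\ref{l35} is not needed at all. Positivity a.e.\ then follows from the lower bound rather than being a prerequisite.

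Both routes rely equally on the monotonicity from Theorem~\ref{t11}(ii), since the paper's first stage also uses it. So the paper's longer route gains no extra generality.

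One wording slip: you should pass to a \emph{smaller} $\varepsilon'<\varepsilon$, taking $c'=h_{g,Q}(\varepsilon')>0$ for the monotone representative, not a larger one.
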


\begin{proof}
By symmetry we only prove the result for the greedy map.
Let us consider the following set:
\begin{equation*}
\mathcal{A}:=\{y\in [0,r_Q):\,h_{g,Q}(y)=0\}.
\end{equation*}
If $y\in \mathcal{A}$, then it follows from \eqref{FP greedy} that
\begin{equation}\label{36}
0=h_{g,Q}(y)=\mathcal{G}^N h_{g,Q}(y)=\sum_{\bw\in\{0,1\}^N}
\frac{1}{A_{\bw}}\,\mathbf 1_{J_{\bw}}(y)\,
h_{g,Q}\bigl(G^{-1}_{\bw}(y)\bigr)
\end{equation}
for every positive integer $N$.
Since $h_{g,Q}\ge 0$,  from \eqref{36} we have
$h_{g,Q}(G_{\bw}^{-1}(y))=0$ whenever $y\in J_{\bw}$. 
In particular, if $y\in \mathcal{A}$ and $I_{\bw}\subset [0,r_Q)=J_{\bw}$, then $G_{\bw}^{-1}(y)\in\mathcal{A}$.
Since $G_{\bw}^{-1}$ maps $J_{\bw}$ onto $I_{\bw}$, we conclude that
\begin{equation}\label{37}
G_{\bw}^{-1}(\mathcal{A})\subset \mathcal{A}\cap I_{\bw}\qtq{whenever}I_{\bw}\subset [0,r_Q)=J_{\bw}.
\end{equation}

We claim that $m(\mathcal{A})=0$.
Indeed, assume on the contrary that $m(\mathcal{A})>0$.
Then $m(\mathcal{A}\cap I)>0$ for every nonempty open interval $I\subset [0,r_Q)$.
To show this we choose a finite word $w$ satisfying $I_{\bw}\subset I$ and $J_{\bw}=[0,r_Q)$.
This is possible by Lemma \ref{l31}, \ref{l34} because the length of the intervals $I_{\bw}$ tends to zero as the length of the words $\bw$ tends to infinity.

Then,  since $G^{-1}_{\bw}$ is an affine bijection of $J_{\bw}=[0,r_Q)$ onto $I_{\bw}\subset I$, using \eqref{37} we obtain that
\begin{equation*}
m(\mathcal{A}\cap I)
\ge m(\mathcal{A}\cap I_{\bw})
\ge m\bj{G^{-1}_{\bw}(\mathcal{A})}
=\frac{1}{A_{\bw}}m(\mathcal{A})>0;
\end{equation*}
here we use the notation
\begin{equation*}
A_{\bw}:=\prod_{i=1}^n q_{w_i}.
\end{equation*}

Since $m(\mathcal{A}\cap I)>0$ for every nonempty open interval $I\subset [0,r_Q)$, $A$ is dense in $[0,r_Q)$, i.e., $h_{g,Q}$ vanishes on a dense subset of $[0,r_Q)$.
Since $h_{g,Q}$ is monotone, and therefore continuous a.e., hence we infer that $h_{g,Q}=0$ a.e. on $[0,r_Q)$.
Applying Lemma \ref{l35} we conclude that $h_{g,Q}=0$ a.e. on $I_Q$.
But this is impossible because $h_{g,Q}$ is a density function, and hence $\int_{I_Q} h_{g,Q}\,dm=1$.

We have proved that $m(\mathcal{A})=0$; hence $h_{g,Q}>0$ a.e. on $[0,r_Q)$.
Therefore there exists a number $\varepsilon_0>0$ such that the set
\begin{equation*}
\mathcal{B}:=\set{x\in [0,r_Q):h_{g,Q}(x)\ge \varepsilon_0}
\end{equation*}
has positive Lebesgue measure. 
Since $h_{g,Q}$ is non-increasing, $\mathcal{B}$ is a non-degenerate interval.
Therefore, similarly as above, we may choose a word $\bw$ such that $I_{\bw}\subseteq \mathcal{B}$ and $J_{\bw}=[0,r_Q)$. 
If $x\in [0,r_Q)$, then $G^{-1}_{\bw}(x)\in I_{\bw}\subseteq \mathcal{B}$.
Hence $h_{g,Q}(G^{-1}_{\bw}(x))\geq \varepsilon_0$, and therefore
\begin{equation*}
h_{g,Q}(x)=\mathcal{G}^{N}h_{g,Q}(x) \ge\ \frac{1}{A_{\bw}}\,h_{g,Q}(G^{-1}_{\bw}(x))
\ \ge\ \frac{\varepsilon_0}{A_{\bw}}>0.
\end{equation*}
This proves \eqref{35} with $\delta_g=\frac{\varepsilon_0}{A_{\bw}}$.
\end{proof}

\begin{proof}[Proof of Theorem~\ref{t12}]
By symmetry we consider only the case of the greedy map.
We already know from Theorem \ref{t11} and Lemma \ref{l35} that
 $\mu_{g,Q}$ is absolutely continuous with respect to the Lebesgue measure $m$, and vanishes outside $[0,r_Q)$.
For the equivalence on $[0,r_Q)$ it remains to observe that if $\mu_{g,Q}(E)=0$ for some measurable subset of $[0,r_Q)$, then 
\begin{equation*}
0=\int_E\, d\mu_{g,Q}=\int_Eh_{g,Q}\, dm,
\end{equation*}
and hence $m(E)=0$ because $h_{g,Q}>0$ a.e. on $E$ by Lemma \ref{l36}.
\end{proof}

\section{Proof of Theorem~\ref{t13}} \label{s4}

By Remark \ref{r22} it is sufficient to show that the functions 
$\tilde{h}_{g,Q}$ and $\tilde{h}_{l,Q}$, given by \eqref{tilde-h-greedy}--\eqref{tilde-h-lazy} are fixed points of the Perron--Frobenius operators $\mathcal{G}$ and $\mathcal{L}$ (see \eqref{FP greedy}--\eqref{FP lazy}), i.e.,
\begin{align}
\tilde{h}_{g,Q}(x)
&=\frac{1}{q_0}\tilde{h}_{g,Q}\bj{\frac{x}{q_0}}1_{[0,r_Q)}(x)
+\frac{1}{q_1}\tilde{h}_{g,Q}\bj{\frac{x+1}{q_1}}\label{41}
\intertext{and}
\tilde{h}_{l,Q}(x)
&=\frac{1}{q_0}\tilde{h}_{l,Q}\bj{\frac{x}{q_0}}
+\frac{1}{q_1}\tilde{h}_{l,Q}\bj{\frac{x+1}{q_1}}1_{(\ell_Q,\frac{1}{q_1-1}]}(x)\label{42}
\end{align}
for almost every $x\in I_Q$.
By symmetry we prove only the first equality.

\begin{proof}[Proof of \eqref{41}]
Setting
\begin{equation*}
f_n(x)=\frac{1_{[0,G^n(r_Q))}(x)}{q_1^{s(n)}q_0^{n-s(n)}},\quad n=0,1,\ldots,
\end{equation*}
where $(d_1,d_2,\ldots)$ is the greedy expansion of $r_Q$ and $s(n)=\sum_{i=1}^nd_i$, by the definition \eqref{tilde-h-greedy} we may rewrite $\tilde{h}_{g,Q}$ in the form
\begin{equation*}
\tilde{h}_{g,Q}=\sum_{n=0}^{\infty}f_n.
\end{equation*}
We have 
\begin{equation*}
\frac{1}{q_0}f_n\bj{\frac{x}{q_0}}1_{[0,r_Q)}(x)
=\frac{1_{[0,\; q_0G^n(r_Q))\cap [0,r_Q)}(x)}{q_1^{s(n)}q_0^{n+1-s(n)}}.
\end{equation*}
If $d_{n+1}=0$, then $G^n(r_Q)<\frac{1}{q_1}$ and $G^{n+1}(r_Q)=q_0G^n(r_Q)<r_Q$,  so that
\begin{equation*}
\frac{1}{q_0}f_n\bj{\frac{x}{q_0}}1_{[0,r_Q))}(x)
=\frac{1_{[0,\; G^{n+1}(r_Q)}(x)}{q_1^{s(n)}q_0^{n+1-s(n)}}=\frac{1_{[0,\; G^{n+1}(r_Q))}(x)}{q_1^{s(n+1)}q_0^{n+1-s(n+1)}}=f_{n+1}(x).
\end{equation*}
If $d_{n+1}=1$, then $G^n(r_Q)\geq \frac{1}{q_1}$ and $q_0G^n(r_Q)\geq r_Q$, whence 
\begin{equation*}
\frac{1}{q_0}f_n(x)\bj{\frac{x}{q_0}}1_{[0,r_Q)}(x)=\frac{1_{[0, r_Q)}(x)}{q_1^{s(n)}q_0^{n+1-s(n)}}.
\end{equation*}

Similarly, we have
\begin{equation*}
\frac{1}{q_1}f_{n}\bj{\frac{x+1}{q_1}}=\frac{1_{[0,\;G^n(r_Q))}\bj{\frac{x+1}{q_1}}}{q_1^{s(n)+1}q_0^{n-s(n)}}=\frac{1_{[0,\;q_1G^n(r_Q)-1))}(x)}{q_1^{s(n)+1}q_0^{n-s(n)}}.
\end{equation*}
If $d_{n+1}=0$, then $1_{[0,\;G^n(r_Q))}\bj{\frac{x+1}{q_1}}=0$. 
If $d_{n+1}=1$, then $G^n(r_Q)\geq \frac{1}{q_1}$ and $G^{n+1}(r_Q)=q_1G^n(r_Q)-1$, and therefore 
\begin{equation*}
\frac{1}{q_1}f_{n}\bj{\frac{x+1}{q_1}}=\frac{1_{[0,\;G^{n+1}(r_Q))}(x)}{q_1^{s(n+1)}q_0^{n+1-s(n+1)}}=f_{n+1}(x).
\end{equation*}

Combining the above four relations we obtain that
\begin{equation*}
\frac{1}{q_0}f_n\bj{\frac{x}{q_0}}1_{[0,r_Q)}(x)+\frac{1}{q_1}f_n\bj{\frac{x+1}{q_1}}=
\begin{cases}
f_{n+1}(x)&\text{ if } d_{n+1}=0,\\
f_{n+1}(x)+\frac{1_{[0, r_Q)}(x)}{q_1^{s(n)}q_0^{n+1-s(n)}}&\text{ if } d_{n+1}=1,
\end{cases}
\end{equation*}
and this may be rewritten in the form
\begin{equation*}
\frac{1}{q_0}f_n\bj{\frac{x}{q_0}}1_{[0,r_Q)}(x)+\frac{1}{q_1}f_n\bj{\frac{x+1}{q_1}}=f_{n+1}(x)+\frac{q_1}{q_0}\frac{d_{n+1}}{q_1^{s(n+1)}q_0^{n+1-s(n+1)}}1_{[0,r_Q)}(x).
\end{equation*}
Using the last formula, the required relation \eqref{41}
follows:
\begin{align*}
\frac{1}{q_0}\tilde{h}_{g,Q}\bj{\frac{x}{q_0}}&1_{[0,r_Q)}(x)
+\frac{1}{q_1}\tilde{h}_{g,Q}\bj{\frac{x+1}{q_1}}\\
&=\sum_{n=0}^{\infty}\bigl(f_{n+1}(x)+\frac{q_1}{q_0}\frac{d_{n+1}}{q_1^{s(n+1)}q_0^{n+1-s(n+1)}}1_{[0,r_Q)}(x)\bigr)\\
&=\sum_{n=1}^{\infty}\frac{1}{q_1^{s(n)}q_0^{n-s(n)}}1_{[0,G^n(r_Q))}(x)+\frac{q_1}{q_0}\sum_{n=1}^{\infty}\frac{d_n}{q_1^{s(n)}q_0^{n-s(n)}}1_{[0,r_Q)}(x)\\
&=\sum_{n=1}^{\infty}\frac{1}{q_1^{s(n)}q_0^{n-s(n)}}1_{[0,G^n(r_Q))}(x)+1_{[0,r_Q)}(x)\\
&=\tilde{h}_{g,Q}(x).\qedhere
\end{align*}
\end{proof}

\begin{example}\label{e41}
If \(q_{0}+q_{1}=q_{0}q_{1}\), then
\begin{equation*}
\frac{q_{0}}{q_{1}}=\frac{1}{q_{1}-1}
\qquad\text{and}\qquad
\frac{q_{1}}{q_{0}(q_{1}-1)}-1=0.
\end{equation*}
Hence the greedy and lazy expansions of \(\frac{q_{0}}{q_{1}}\) and
\(\frac{q_{1}}{q_{0}(q_{1}-1)}-1\) are \(1^{\infty}\) and \(0^{\infty}\), respectively.
Consequently,
\begin{equation*}
\tilde h_{g,Q}=\sum_{n=0}^{\infty}\frac{1}{q_{1}^{n}}=\frac{q_{1}}{q_{1}-1},
\qquad
\tilde h_{l,Q}=\sum_{n=0}^{\infty}\frac{1}{q_{0}^{n}}=\frac{q_{0}}{q_{0}-1}.
\end{equation*}
After normalization, the invariant probability densities are \(h_{g,Q}=h_{l,Q}=q_{1}-1\).
Hence
\begin{equation*}
\mu_{g,Q}(E)=\mu_{l,Q}(E)=\frac{m(E)}{m(I_Q)}
\end{equation*} 
for every measurable $E\subseteq I_Q$, so that
$\mu_{g,Q}$ and $\mu_{l,Q}$ coincide with the normalized Lebesgue measure on $I_Q$.
\end{example}

\begin{example}\label{e42}
We prescribe that \(q_{0}/q_{1}\) and \(q_{1}/(q_{0}(q_{1}-1))-1\) have
greedy and lazy expansions \(1110^{\infty}\) and \(001^{\infty}\), respectively.
Solving the resulting constraints yields \(q_{0}= 2.1479\) and \(q_{1}=1.46557\). Consequently,
\(h_{g,Q}\) is the jump function displayed in Figure~\ref{fig:hgq-step}
with breakpoints at \(1/q_{1}\), \(q_{0}-1\), and \(q_{0}/q_{1}\).

\begin{figure}[htbp]
  \centering
\begin{tikzpicture}
  % 常量
  \pgfmathsetmacro\bone{0.682328}
  \pgfmathsetmacro\btwo{1.1479}
  \pgfmathsetmacro\bthree{1.465573}

  \pgfmathsetmacro\vone{0.8369}
  \pgfmathsetmacro\vtwo{0.6554}
  \pgfmathsetmacro\vthree{0.3896}
  \pgfmathsetmacro\vfour{0.0}

  \pgfmathsetmacro\xmax{1.55}
  \pgfmathsetmacro\ymax{1}

  \begin{axis}[
    width=11cm, height=6cm,
    xmin=0, xmax=\xmax,
    ymin=0, ymax=\ymax,
    axis background/.style={fill=white},
    grid=none,
    axis x line=bottom,
    axis y line=left,
    axis line style={line width=1.3pt},
    every axis x line/.append style={stealth}, % 轴线箭头
    every axis y line/.append style={stealth},
    tick align=outside,
    tick style={black},
    xtick={\bone,\btwo,\bthree},
    xticklabels={0.682328, 1.1479, 1.465573},
    ytick={0,0.3896,\vtwo,\vone},
    yticklabels={0, 0.3896, 0.6554, 0.8369},
  ]

  % ---- 水平蓝色分段（不画竖向蓝线）----
  \addplot[very thick, color=myblue] coordinates {(0,\vone)   (\bone,\vone)};
  \addplot[very thick, color=myblue] coordinates {(\bone,\vtwo) (\btwo,\vtwo)};
  \addplot[very thick, color=myblue] coordinates {(\btwo,\vthree) (\bthree,\vthree)};
  \addplot[very thick, color=myblue] coordinates {(\bthree,\vfour) (\xmax,\vfour)};

  % ---- 跳变处虚线：竖线到 x 轴 & 水平虚线到断点 ----
  \addplot[gray, dashed, thick] coordinates {(\bone,0)   (\bone,\vone)};
  \addplot[gray, dashed, thick] coordinates {(\btwo,0)   (\btwo,\vtwo)};
  \addplot[gray, dashed, thick] coordinates {(\bthree,0) (\bthree,\vthree)};

  \addplot[gray, dashed, thick] coordinates {(0,\vtwo)   (\bone,\vtwo)};
  \addplot[gray, dashed, thick] coordinates {(0,\vthree) (\btwo,\vthree)};
\end{axis}
\end{tikzpicture}
\caption{The step function $h_{g,Q}$ with $q_0=2.1479$, $q_1=1.46557$.}
  \label{fig:hgq-step}
\end{figure}
\end{example}

%\blue{I have checked the paper until here.}

\section{Proof of Theorems~\ref{t14} and\ref{t15}}\label{s5}

For the proof of Theorem~\ref{t14} we need two lemmas.
The first one is a  classical integral inequality of Chebyshev: 

\begin{lemma}\label{l51}
Let $f,g\in L^1[a,b]$. 
If $f$ and $g$ have the same monotonicity on $[a,b]$, then
\begin{equation*}
\int_a^b f(x)g(x)\,dx
\;\ge\; \frac{1}{b-a}\Big(\int_a^b f(x)\,dx\Big)\Big(\int_a^b g(x)\,dx\Big).
\end{equation*}
If $f$ and $g$ have the opposite monotonicity on $[a,b]$, then the reverse inequality holds:
\begin{equation*}
\int_a^b f(x)g(x)\,dx
\;\le\; \frac{1}{b-a}\Big(\int_a^b f(x)\,dx\Big)\Big(\int_a^b g(x)\,dx\Big).
\end{equation*}
Moreover, equality holds if and only if $f$ or $g$ is constant almost everywhere.
\end{lemma}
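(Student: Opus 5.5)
The plan is to prove Chebyshev's inequality through the standard symmetrization identity. Set $\ell=b-a$ and consider the double integral
\begin{equation*}
D:=\int_a^b\int_a^b \bigl(f(x)-f(y)\bigr)\bigl(g(x)-g(y)\bigr)\,dx\,dy .
\end{equation*}
Expanding the product and using Fubini's theorem gives
\begin{equation*}
D=2\ell\int_a^b f g\,dx-2\Big(\int_a^b f\,dx\Big)\Big(\int_a^b g\,dx\Big).
\end{equation*}
Both inequalities of the lemma are therefore equivalent to determining the sign of $D$. To justify the expansion I need $fg$ to be integrable. In the situation of the paper, one factor is a bounded monotone density by Theorem~\ref{t11}, so I would either assume one of the functions is bounded or note that a monotone function in $L^1[a,b]$ is bounded on compact subintervals and then approximate. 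In practice I would simply require $fg\in L^1$ as part of the hypotheses.

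For the sign, suppose $f$ and $g$ are both non-decreasing (or both non-increasing). For every pair $(x,y)$ the differences $f(x)-f(y)$ and $g(x)-g(y)$ then have the same sign, so the integrand is pointwise nonnegative and $D\ge0$, which gives the first inequality. If the monotonicities are opposite, the integrand is pointwise nonpositive, so $D\le 0$, which gives the reverse inequality.

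The equality clause is the only delicate step. If $f$ or $g$ is a.e.\ constant, the integrand vanishes a.e.\ and $D=0$. Conversely, suppose $D=0$. The integrand has a fixed sign, so $(f(x)-f(y))(g(x)-g(y))=0$ for a.e.\ $(x,y)$. Assume $f$ is not a.e.\ constant. By monotonicity there is a point $c\in(a,b)$ such that $f$ on $[a,c)$ and $f$ on $(c,b]$ take values whose essential ranges are strictly separated; more precisely, there are sets of positive measure $A\subset[a,c)$ and $B\subset(c,b]$ with $f(x)\ne f(y)$ for $x\in A$, $y\in B$. It then follows that $g(x)=g(y)$ for a.e.\ $(x,y)\in A\times B$. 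By monotonicity of $g$, this forces $g$ to be constant on the interval spanned by these points.

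Finishing the converse requires more care, and I expect it to be the main obstacle. Strictly speaking, the ``only if'' direction is false for general monotone pairs: take $f$ constant on $[a,c]$ and increasing afterwards, and $g$ increasing on $[a,c]$ and constant afterwards. For such a pair the integrand vanishes identically, yet neither function is constant. I would therefore prove the reverse implication only in the form actually needed later, namely when one of the functions is strictly monotone (for instance the identity-type weight that will arise when Lemma~\ref{l51} is applied). In that case, for a.e.\ $x$ one has $f(x)\ne f(y)$ for a.e.\ $y$, which forces $g(x)=g(y)$ for a.e.\ $y$, so $g$ is a.e.\ constant.
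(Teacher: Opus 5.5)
\textbf{Verdict: genuine gap in the equality clause.} Your symmetrization argument for the two inequalities is the standard proof and is correct; the paper gives no argument of its own and only cites the literature. Your integrability worry is unnecessary. A function monotone on the closed interval $[a,b]$ lies between its endpoint values, so $f$ and $g$ are bounded and $fg\in L^1$ automatically; no extra hypothesis is needed.

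The real problem is the ``only if'' part of the equality clause, and your counterexample is wrong. Suppose $f$ is constant on $[a,c]$ and increasing on $[c,b]$, while $g$ is increasing on $[a,c]$ and constant on $[c,b]$. Then for $x<c<y$ both $f(x)-f(y)$ and $g(x)-g(y)$ are negative, so the integrand of $D$ is strictly positive on $[a,c)\times(c,b]$ and the inequality is strict. For instance, on $[0,2]$ take $f(x)=\max\{0,x-1\}$ and $g(x)=\min\{x,1\}$. Then $\int_0^2 fg\,dx=\frac12$, whereas $\frac12\cdot\frac12\cdot\frac32=\frac38$.

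The lemma is in fact true for all monotone pairs. Your argument stalls only because you took $A$ and $B$ near a single point $c$ instead of letting them reach the endpoints. Here is the missing step, in the case where both functions are non-decreasing.
\begin{enumerate}
\item Assume $D=0$ and $f$ is not a.e.\ constant. Then $f$ is not constant on $(a,b)$, so there are $a<s<t<b$ with $f(s)<f(t)$.
\item Monotonicity gives $f(x)\le f(s)<f(t)\le f(y)$ for all $x\in[a,s]$ and $y\in[t,b]$. Hence $g(x)=g(y)$ for a.e.\ $(x,y)\in[a,s]\times[t,b]$.
\item For $0<\varepsilon<\min\{s-a,b-t\}$, the square $[a,a+\varepsilon)\times(b-\varepsilon,b]$ lies in that rectangle and has positive measure. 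So it contains a pair $(x,y)$ with $g(x)=g(y)$.
\item Since $g$ is monotone, it is constant on $[x,y]\supset[a+\varepsilon,b-\varepsilon]$. Letting $\varepsilon\to0$, $g$ is constant on $(a,b)$.
\end{enumerate}
The opposite-monotonicity case follows by replacing $g$ with $-g$.

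Your restricted version, with one factor strictly monotone, does cover the paper's only use of the lemma: the identity function against $h_{g,Q}$ and $h_{l,Q}$ in the proof of Theorem~\ref{t14}. As a proof of the lemma as stated, however, it is incomplete, and you should withdraw the claim that the lemma is false.
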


\begin{proof}
See, e.g., \cite{MitrinovicPecaricVasic1993} and \cite{Jakubowski2021}.
\end{proof}

Our next lemma exhibits an important property of the greedy map $G$ (see \eqref{greedy-map}).
Let us introduce a partial inverse $H:I_Q\to I_Q$ of $G$ by the formula
\begin{equation*}
H(t):=\frac{1+t}{q_1}.
\end{equation*}
Observe that $H$ is an increasing affine map, $H(t)\ge t$ for all $t$ with equality only if $t=1/(q_1-1)$, and that $G(H(t))=t$ for all $t\in I_Q$.

\begin{lemma}\label{l52}
Except $x=1/(q_1-1)$, every point $x\in I_Q$ if of the form $x=H^k(t)$ for some $k\ge 0$ and $t\in [0,r_Q]$.
More precisely,
\begin{equation*}
\bigcup_{k=0}^{\infty}H^k\bj{[0,r_Q]}=\left[0,\frac{1}{q_1-1}\right).
\end{equation*}

%\begin{equation*}
%\cup_{n=1}^{\infty}H^n\bj{[q_0-1,r_Q]}=\left[r_Q,\frac{1}{q_1-1}\right).
%\end{equation*}
\end{lemma}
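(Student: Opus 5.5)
We prove the two inclusions separately, using only the elementary properties of $H$ noted just before the statement: $H$ is an increasing affine contraction with ratio $1/q_1$ and fixed point $p:=1/(q_1-1)$, and $H(t)>t$ for $t<p$.

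\emph{Inclusion $\subseteq$.} Since $q_0+q_1\ge q_0q_1$ gives $r_Q=q_0/q_1\le 1/(q_1-1)=p$, we have $[0,r_Q]\subset I_Q$. Because $H$ maps $I_Q$ into itself, every $H^k([0,r_Q])$ lies in $I_Q$. Moreover $H^k(t)=p$ forces $t=p$, since $H$ is injective and fixes $p$. In the equality case $q_0+q_1=q_0q_1$ we have $r_Q=p$, so this degenerate point must be handled separately; there the displayed equality should be read with the half-open interval $[0,r_Q)$, or the point $p$ excluded, and we flag this as a minor edge case. When $q_0+q_1>q_0q_1$ we have $r_Q<p$, hence $H^k(t)<p$ for all $t\in[0,r_Q]$, and the inclusion follows.

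\emph{Inclusion $\supseteq$.} Write $H(t)-p=(t-p)/q_1$. Iterating gives
\begin{equation*}
H^k(t)=p-\frac{p-t}{q_1^k}.
\end{equation*}
Set $a_k:=H^k(0)=p(1-q_1^{-k})$ and $b_k:=H^k(r_Q)$. Then $H^k([0,r_Q])=[a_k,b_k]$, and $a_k\uparrow p$.

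It therefore suffices to show $b_k\ge a_{k+1}$ for every $k$, since then the intervals $[a_k,b_k]$ overlap consecutively and cover $[0,\lim a_k)=[0,p)$. Applying the increasing map $H^k$ to the inequality $r_Q\ge H(0)=1/q_1$ yields exactly this, because $H^k(r_Q)\ge H^{k+1}(0)$. The inequality $r_Q\ge 1/q_1$ holds because it is equivalent to $q_0\ge 1$, which is true.

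Finally, take $x\in[0,p)$. Choose the largest $k$ with $a_k\le x$; such a $k$ exists because $a_0=0\le x$ and $a_k\to p>x$. Then $x<a_{k+1}\le b_k$, so $x\in[a_k,b_k]=H^k([0,r_Q])$. There is no real obstacle in this argument; the only care needed is the boundary case $r_Q=p$ treated above.
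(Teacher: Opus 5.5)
Your proof is correct and follows the same idea as the paper's. The key point in both is that $H(0)=1/q_1\le r_Q$, so consecutive images $H^k([0,r_Q])$ overlap and their union is an interval from $0$ up to the fixed point $1/(q_1-1)$; you simply make this explicit through the closed form $H^k(t)=p-(p-t)q_1^{-k}$. Your flag on the boundary case is also justified: when $q_0+q_1=q_0q_1$ one has $r_Q=1/(q_1-1)$, the union is all of $I_Q$, and the displayed equality fails. The paper's proof passes over this case silently, but it does no harm, since the lemma is only applied under $q_0+q_1>q_0q_1$ in Theorem~\ref{t14}.
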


\begin{proof}
First we note that
\begin{equation*}
H\bj{[0,r_Q]}=H\bj{\left[0,\frac{q_0}{q_1}\right]}
=\left[\frac{1}{q_1},\frac{q_0+q_1}{q_1^2}\right].
\end{equation*}
Since the left endpoint of the interval on the right hand side belongs to the interval on the left hand side, iterating the  increasing affine map $H$, and using the property $H(t)\ge t$ we obtain that 
\begin{equation*}
\bigcup_{k=0}^{\infty}H^k\bj{[0,r_Q]}=[0,L)\qtq{with}L=\lim H^k(r_Q).
\end{equation*}
Since $H$ is continuous,
\begin{equation*}
H(L)=H\bj{\lim H^k(r_Q)}=\lim H^{k+1}(r_Q)=L,
\end{equation*}
and hence $L=1/(q_1-1)$.
\end{proof}

\begin{proof}[Proof of Theorem~\ref{t14}]
The normalized errors of the greedy expansion $b(x)=(b_i)$ of $x$ are defined by 
\begin{equation*}
\theta_n(b(x))=q_{b_1}q_{b_2}\cdots q_{b_{n}}\bj{x-\sum_{i=1}^{n}\frac{b_i}{q_{b_1}\cdots q_{b_i}}}.
\end{equation*}
It is easily seen that $G^n(x)=\theta_n(b(x))$.
Therefore a straightforward application of Birkhoff’s ergodic theorem yields that
\begin{equation}\label{51}
\lim_{n\to \infty}\frac{1}{n}\sum_{j=0}^{n-1}\theta_j(b(x))
=\lim_{n\to \infty}\frac{1}{n}\sum_{j=0}^{n-1}G^j(x)
=\int_{I_Q}xh_{g,Q}(x)\,dx
\end{equation}
for $\mu_{g,Q}$-almost every $x\in I_Q$, and hence for Lebesgue almost every $x\in [0,r_Q]$ (we apply Theorem \ref{t12} (i) here).

In order to show that \eqref{51} holds in fact for almost every $x\in I_Q$, let us denote by $N_0$ the set of numbers $x\in [0,r_Q]$ for which \eqref{51} does not hold.
We already know that $N_0$ is a Lebesgue null set.
Since the map $H$ of Lemma \ref{l52} is affine, the set
\begin{equation*}
N:=\bj{\bigcup_{k=0}^{\infty}H^k(N_0)}\bigcup \set{\frac{1}{q_1-1}}
\end{equation*} 
is also a Lebesgue null set.

If $x\in I_Q\setminus N$, then by Lemma \ref{l52} there exists a $k\ge 0$ such that $x\in H^k\bj{[0,r_Q]}$, and then $G^kx\in [0,r_Q]\setminus N_0$ (we recall that $G\circ H$ is the identity map).
Since the Cesàro convergence is not affected by the addition of a finite number of elements to a sequence, we conclude that $x$ also satisfies \eqref{51}.

Similarly to the relations \eqref{51}, by symmetry we also have
\begin{equation}\label{52}
\lim_{n\to \infty}\frac{1}{n}\sum_{j=0}^{n-1}\theta_j(l(x))=\int_{I_Q}xh_{l,Q}(x)\,dx
\end{equation}
for Lebesgue almost every $x\in I_Q$, where $l(x)$ denotes the lazy expansion of $x$.

Assume for a moment that the integrals in \eqref{51} and \eqref{52} are different.
Then
\begin{equation*}
\lim_{n\to \infty}\frac{1}{n}\sum_{j=0}^{n-1}\theta_j(b(x))
\ne \lim_{n\to \infty}\frac{1}{n}\sum_{j=0}^{n-1}\theta_j(l(x))
\end{equation*}
for almost every $x\in I_Q$, so that almost every $x\in I_Q$ has different greedy and lazy expansions.
Consequently, the set of numbers having unique expansions is a Lebesgue null set.

The difference of the integrals in \eqref{51} and \eqref{52} follows from the inequalities
\begin{align*}
\int_{I_Q}xh_{g,Q}(x)\,dx
&<(q_1-1)\Bigl(\int_{I_Q}x\,dx\Bigr)\Bigl(\int_{I_Q}h_{g,Q}(x)\,dx\Bigr)=\frac{1}{2(q_1-1)}
\intertext{and}
\int_{I_Q}xh_{l,Q}(x)\,dx
&>(q_1-1)\Bigl(\int_{I_Q}x\,dx\Bigr)\Bigl(\int_{I_Q}h_{l,Q}(x)\,dx\Bigr)=\frac{1}{2(q_1-1)},
\end{align*}
The corresponding weak inequalities follow from Lemma \ref{l51} because the identity function and $h_{l,Q}$ are both non-decreasing, while $h_{g,Q}$ is non-increasing on $I_Q$, and because non of the three functions is constant.
Indeed, the monotonicity statements follow from Theorem \ref{t11}, and the functions $h_{l,Q}$ and $h_{g,Q}$ are not constant because they have nonzero integral, and they vanish on non-empty intervals by Theorem \ref{t12}.
\end{proof}Finally, we deduce Theorem \ref{t15} from Theorem \ref{t14}.

\begin{proof}[Proof of Theorem~\ref{t15}]
For convenience, we henceforth denote the lazy and greedy maps by $T_0$ and $T_1$, respectively. 
We also define $T_{w_1\cdots w_n}=T_{w_n}\circ\cdots\circ T_{w_1}$ for $w_1\ldots w_n\in\{0,1\}^n$ and $n=1,2,\ldots .$
Observe that if $(w_i)_{i\geq 1}$ is an expansion of a number $x\in I_Q$, then $(w_{n+i})=w_{n+1}w_{n+2}\cdots$ is an expansion of  $T_{w_1\cdots w_n}(x)$ for each $n\geq 1$.  

Since $T_0$ and $T_1$ are nonsingular, preimages of null sets are again null sets. 
Since $\mathcal{U}_Q$ has measure zero by Theorem \ref{t14}, the set
\begin{equation*}
\mathcal{W}_Q=\bigcup_{n=1}^{\infty}\set{x\in I_Q:\,T_{w_1\cdots w_n}(x)\in \mathcal{U}_Q\text{ for some }w_1\ldots w_n\in\set{0,1}}
\end{equation*}
is a null set. 

Applying Sidorov's bifurcation argument \cite{Sidorov2003b, Sidorov2003c,Sidorov2009} we obtain that every number $x\in I_Q\setminus\mathcal{W}_Q$ has $2^{\aleph_0}$ expansions.
See also Dajani and de Vries \cite{DajaniDeVries2007} for more details on Sidorov's  reasoning. 
%
%To give some details, first we observe that if $(a_i)_{i\geq 1}$ is an expansion of a number $x\in I_Q$, then $(a_{n+i})=a_{n+1}a_{n+2}\cdots$ is an expansion of  $T_{a_1\cdots a_n}(x)$ for each $n\geq 1$. 
%
%Let $(a_i)_{i\geq 1}$ be an expansion of an arbitrary number $x\in I_Q\setminus\mathcal{W}_Q$, and
%\begin{equation*}
%r_1=\min\{n\geq 1:\,\sum_{l=1}^{\infty}\frac{a_{n+l-1}}{q_{a_n}\cdots q_{a_{n+l-1}}}\in I_Q\setminus\mathcal{V}_Q \}
%\end{equation*}
%\begin{equation*}
%    r_i=\min\{n\geq r_{i-1}:\,\sum_{l=1}^{\infty}\frac{a_{n+l-1}}{q_{a_n}\cdots q_{a_{n+l-1}}}\in I_Q\setminus\mathcal{V}_Q \}
%\end{equation*}
%then, proceeding recursively, each $a_{r_i}$ can be chosen 0 or 1. we can construct recursively a full binary tree of possible expansions of $x$, whence $\card\mathscr{E}_Q(x)=2^{\#\{r_1,r_2,\ldots\}}=2^{\aleph_0}$ for all $x\in I_Q\setminus\mathcal{V}_Q$.
\end{proof}

\color{black}

\end{document}